\newtheorem{prethm}{{\bf Theorem}}
\newenvironment{thm}{\begin{prethm}{\hspace{-0.5
               em}{\bf.}}}{\end{prethm}}
\newtheorem{prepro}[prethm]{{\bf Theorem}}
\newtheorem{preprop}[prethm]{{\bf Proposition}}
\newtheorem{precor}[prethm]{{\bf Corollary}}
\newenvironment{cor}{\begin{precor}{\hspace{-0.5
               em}{\bf.}}}{\end{precor}}
\newtheorem{preconj}[prethm]{{\bf Conjecture}}
\newtheorem{preremark}[prethm]{{\bf Remark}}
\newenvironment{remark}{\begin{preremark}\rm{\hspace{-0.5
               em}{\bf.}}}{\end{preremark}}
\newtheorem{preques}[prethm]{{\bf Question}}
\newenvironment{ques}{\begin{preques}\rm{\hspace{-0.5
               em}{\bf.}}}{\end{preques}}
               \newtheorem{predefn}[prethm]{{\bf Definition}}
\newtheorem{preexample}[prethm]{{\bf Example}}
\newenvironment{example}{\begin{preexample}\rm{\hspace{-0.5
               em}{\bf.}}}{\end{preexample}}
\newtheorem{prelem}[prethm]{{\bf Lemma}}
\newenvironment{lem}{\begin{prelem}{\hspace{-0.5
               em}{\bf.}}}{\end{prelem}}
\newtheorem{prelam}{{\bf Lemma}}
\newtheorem{preproof}{{\bf Proof.}}
\newenvironment{proof}[1]{\begin{preproof}{\rm
               #1}\hfill{$\Box$}}{\end{preproof}}
\newcommand{\diam}{\mathop{\mathrm{diam}}\nolimits}
\title{\bf \large  On the structure of the power graph and \\ the enhanced power graph 
of a group
\thanks
{{\it Key Words}: Power graph, Clique
number, Chromatic number, Independence number, Group. 
\newline \noindent 
2010{ \it Mathematics Subject Classification}: 05C25, 05C69, 20D60.}}
\author{ 
Ghodratollah Aalipour\thanks{Department of Mathematics and Computer Sciences, Kharazmi University, 50 Taleghani Avenue, Tehran, Iran and and Department of Mathematical and Statistical Sciences, University of Colorado Denver, CO 80217, USA (ghodrat.aalipour@ucdenver.edu)}  
\and Saieed  Akbari\thanks{Department of Mathematical Sciences, Sharif University of Technology (s\_akbari@sharif.edu)}
\and Peter J. Cameron\thanks{School of Mathematics and Statistics, University of St Andrews and School of Mathematical Sciences, Queen Mary, University of London (pjc20@st{\rm-}andrews.ac.uk)}
\and Reza Nikandish\thanks{Department of Basic Sciences, Jundi-Shapur University of Technology, Dezful, Iran. P.O. BOX 64615-334  (r.nikandish@ipm.ir)}
\and Farzad Shaveisi\thanks{”Department of Mathematics, Faculty of Sciences, Razi University, Kermanshah, Iran (f.shaveisi@razi.ac.ir)}
}
\date{}
\begin{document}
\maketitle
\begin{abstract}
{\small Let $G$ be a group. The  \emph{power graph} of $G$ is a graph with the vertex
set $G$, having an edge between two elements whenever one is a power of the other. We characterize  nilpotent groups whose power graphs have finite independence number. For a bounded exponent group, we prove its power graph is a perfect graph and we determine 
its clique/chromatic number. Furthermore, it is proved that for every group $G$, the clique number of the power graph of $G$ is at most countably infinite. We also measure how close the power graph is to the \emph{commuting graph} by introducing a new graph which lies in between. We call this new graph as the \emph{enhanced power graph}. For an arbitrary pair of these three graphs we characterize finite groups for which this pair of graphs are equal. }
\end{abstract}
\section{Introduction}

We begin with some standard definitions from graph theory and group theory.

Let $G$ be a graph with vertex set $V(G)$. If $x\in V(G)$, then the number of vertices adjacent to $x$ is called the \textit{degree} of $x$, and denoted by $\deg(x)$.
The \textit{distance} between two vertices in a graph is the number of
edges in a shortest path connecting them. The \textit{diameter} of
a connected graph $G$, denoted by $\diam (G)$, is the maximum distance
between any pair of vertices of $G$. If $G$ is disconnected, then $\diam (G)$ is defined to be infinite.
A \textit{star} is a graph in which there is a vertex adjacent to all other vertices, with no further edges. The \textit{center} of a star is a vertex that is adjacent to all other vertices.
Let $U\subseteq V(G)$. The induced subgraph on $U$ is denoted by $\langle U\rangle$. An \textit{independent set} is a set of vertices in a graph, no two of which are adjacent; that is, a set whose induced subgraph is null.
The {\it{independence number}} of a graph $G$ is the cardinality of the largest independent set and is denoted by $\alpha(G)$.
A subset $S$ of the vertex set of $G$ is called a \textit{dominating set} if for every vertex $v$ of $G$, either $v\in S$ or $v$ is adjacent to a vertex in $S$. The minimum size of dominating sets of $G$, denoted by $\gamma(G)$, is called the \textit{domination number} of $G$. A \textit{clique} in a graph is a set of pairwise adjacent vertices. The supremum of the sizes of cliques in $G$, denoted by
$\omega(G)$, is called the \textit{clique number} of $G$. By
$\chi(G)$, we mean the \textit{chromatic number} of $G$, i.e., the
minimum number of  colours which can be assigned to the vertices of
$G$ in such a way that every two adjacent vertices have different
 colours.

The cyclic group of order $n$ is denoted by $C_n$. A group $G$ is called \textit{periodic} if every element of $G$ has finite order. For every element $g\in G$, the order of $g$ is denoted by $o(g)$. 
If there exists an integer $n$ such that for all $g\in G$, $g^n=e$, where $e$ is the identity element of $G$, then $G$ is said to be of \textit{bounded exponent}. If $G$ is of bounded exponent, then the \textit{exponent} of $G$ is the least common multiple of the orders of its elements; that is, the least $n$ for which $g^n=e$ for all $g\in G$. A group $G$ is said to be \textit{torsion-free} if
apart from the identity every element of $G$ has infinite order. Let $p$  be a prime number. The \textit{$p$-quasicyclic group} (known also 
as the \textit{Pr\"{u}fer group}) is the $p$-primary component of $\mathbb{Q}/\mathbb{Z}$, that is, the unique maximal $p$-subgroup of $\mathbb{Q}/\mathbb{Z}$.
It is denoted by $ C_{p^\infty}$. The \textit{center} of a group $G$, denoted by $Z(G)$, is the set of elements that commute with every element of $G$. A group $G$ is called \textit{locally finite} if every finitely generated subgroup of $G$ is finite. A group is \textit{locally cyclic} if any finitely generated subgroup is cyclic.

Other concepts will be defined when needed.

Now, we define the object of interest to us in this paper.

Let $G$ be a group. The \textit{power graph} of $G$, denoted by $\mathcal{G}(G)$, is the graph whose vertex set is $G$, two elements being
adjacent if one is a power of the other. This graph was first introduced for semigroups in \cite{ghosh} and then was studied in \cite{cameron} and \cite{cameronghosh} for groups. It was shown that, for a finite group, the undirected
power graph determines the directed power graph up to isomorphism.
As a consequence, two finite groups which have isomorphic undirected
power graphs have the same number of elements of each order. The authors in \cite{cameronghosh} have shown that the only finite
group whose automorphism group is the same as that of its power graph is the Klein group
of order $4$. 

Our results about the power graph fall into four classes.
\begin{itemize}\itemsep0pt
\item In Section~\ref{independence}, we consider the independence number $\alpha(\mathcal{G}(G))$. We
show that if the independence number is finite then $G$ is a locally finite
group whose centre has finite index. Using this we are able to give precise
characterizations of nilpotent groups $G$ for which $\alpha(\mathcal{G}(G))$
is finite -- such a group (if infinite) is the direct product of a
$p$-quasicyclic group and a nilpotent $p'$-group.
\item In Section~\ref{colouring}, we show that the power graph of every group has clique number at most countable. A group
with finite clique number must be of bounded exponent. Hence we obtain a
structure theorem for abelian groups with this property, as well as showing
that it passes to subgroups and supergroups of finite index.
\item We do not know whether the chromatic number of every group is at most
countable; in Section~\ref{perfectness}, we prove this for periodic groups and for free groups. We show
that, if $G$ has bounded exponent, then $\mathcal{G}(G)$ is perfect.
\item Finally, in Section~\ref{misc} there are some miscellaneous results. A group is periodic if and only if
its power graph is connected, and in this case its diameter must be at most $2$.
Also we show that, if all vertex degrees in $\mathcal{G}(G)$ are finite, then
$G$ is finite.
\end{itemize}
In the recent paper \cite{feng-xuanlong-wang}, the authors prove that the power graph of every finite group is perfect. We acknowledge that our result on the perfectness along with all results in the Section \ref{sec-power-graph} were proved independently in 2011.

Another well-studied graph associated to a group $G$ is the \emph{commuting graph} of $G$. This graph appears to be first studied by Brauer and Fowler in 1955 in \cite{Brauer-Fowler} as a part of classification of finite simple groups. As the elements of the centre are adjacent to all other vertices, usually the vertices are assumed to be non-central. 
For more information on the commuting graph, see \cite{Araujo-et, Giudici-Pope, Woodcock} and the references therein. 

In Section \ref{power-commuting} we relate the power graph to the commuting graph and characterize when they are equal for finite groups. A new graph pops up while considering these graphs, a graph whose vertex set consists of all group elements, in which two vertices $x$ and $y$ are adjacent if they generate a cyclic group. We call this graph 
as  the \emph{enhanced power graph} of $G$ and we denote it by $\mathcal{G}_e(G)$. The enhanced power graph contains the power graph  and is a subgraph of the commuting graph. We further study some properties of this graph in the Section \ref{power-commuting}. 

We characterize the finite groups for which equality holds for any two of
these three graphs, and the solvable groups for which the power graph is equal
to the commuting graph. Other results are as follows:
\begin{itemize}\itemsep0pt
\item If the power graphs of $G$ and $H$ are isomorphic, then their enhanced
power graphs are isomorphic.
\item A maximal clique in the enhanced power graph is either a cyclic or a
locally cyclic subgroup.
\item $\mathcal{G}_e(G)$ has finite clique number if and only if $G$ has
finite exponent; if this holds, then the clique number of $\mathcal{G}_e(G)$
is equal to the largest order of an element of $G$. Also, for any group $G$,
the clique number of $\mathcal{G}_e(G)$ is at most countable.
\end{itemize}

\section{Power graphs of groups}\label{sec-power-graph}

\subsection{Independent sets in power graphs}\label{independence}

In this section we provide some results on the finiteness of the independence number of the power graphs. In the proof of the our first theorem, we need the following definition.
Let $G$ be a group and associate with $G$ a graph $\Gamma(G)$ as follows: the
vertices of $\Gamma(G)$ are the elements of $G$ and two vertices $g$ and $h$ of $\Gamma(G)$ are joined by an edge if and only if $g$ and $h$ do not commute, see \cite{akbari} and \cite{neuman} for more details. Now, we have the following result.
\begin{thm}\label{finiteindependent}
Let $G$ be a group and $\alpha({\mathcal {G}}(G))<\infty$. Then
\begin{enumerate}
\item[\rm (i)] $[G:Z(G)]<\infty$.
\item[\rm (ii)] $G$ is locally finite.
\end{enumerate}
\end{thm}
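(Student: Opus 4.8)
The plan is to exploit the graph $\Gamma(G)$ (the non-commuting graph) defined just before the statement, together with the classical theorem of B.\,H.\ Neumann that a group in which $Z(G)$ has infinite index contains an infinite set of pairwise non-commuting elements. First I would show that a set of pairwise non-commuting elements is, after a small adjustment, an independent set in $\mathcal{G}(G)$: if $g$ and $h$ do not commute, then neither can be a power of the other (powers of a fixed element commute), so $\{g,h\}$ is a non-edge of $\mathcal{G}(G)$. Hence any clique of $\Gamma(G)$ is an independent set of $\mathcal{G}(G)$, which gives $\alpha(\mathcal{G}(G)) \geq \omega(\Gamma(G))$. Since $\alpha(\mathcal{G}(G))<\infty$, the non-commuting graph $\Gamma(G)$ has finite clique number, and then Neumann's theorem forces $[G:Z(G)]<\infty$, proving (i).

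For part (ii), I would first reduce to $Z(G)$: since $Z(G)$ has finite index, $G$ is locally finite if and only if $Z(G)$ is (a finitely generated subgroup of $G$ meets $Z(G)$ in a subgroup of finite index, and a group with a finite-index finite subgroup is finite). So it suffices to prove that $Z(G)$, an abelian group with $\alpha(\mathcal{G}(Z(G)))<\infty$ (an induced subgraph can only have smaller independence number), is locally finite; equivalently, that $Z(G)$ is periodic and that its torsion subgroup is locally finite. If $Z(G)$ contained an element $x$ of infinite order, then $\{x^{p} : p \text{ prime}\}$ would be an infinite independent set in $\mathcal{G}(Z(G))$, since $x^{p}$ is a power of $x^{q}$ only when $q \mid p$; contradiction. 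So $Z(G)$ is periodic. For an abelian periodic group $A = Z(G)$, write it as the direct sum of its $p$-primary components $A_p$. If infinitely many $A_p$ are nontrivial, picking one element of order $p$ from each gives an infinite independent set (elements of coprime order are never powers of one another, the identity aside); so only finitely many components are nontrivial. Within a single $p$-group $A_p$, if $A_p$ is infinite then it contains either an infinite elementary abelian subgroup or a copy of $C_{p^\infty}$; in the first case the nontrivial elements of a countable elementary abelian subgroup, grouped appropriately, yield an infinite independent set, and in the second case one checks directly that a Prüfer group has infinite independence number in its power graph. Hence each $A_p$ is finite, $A$ is finite, and $G$ is locally finite.

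The main obstacle I anticipate is the last step: ruling out an infinite abelian $p$-group inside $Z(G)$. The elementary abelian case is easy (in an elementary abelian group no nontrivial element is a power of another distinct nontrivial element, so the whole set of nontrivial elements is independent). The subtle case is a divisible subgroup $C_{p^\infty}$, where the power graph is in fact connected; here I would argue that any maximal chain of cyclic subgroups is finite but there are infinitely many "incomparable" elements — concretely, in $C_{p^\infty}=\langle a_1,a_2,\dots \mid a_1^p=e,\ a_{i+1}^p=a_i\rangle$ one finds elements $b_i$ with $b_i\notin\langle b_j\rangle$ and $b_j\notin\langle b_i\rangle$ for $i\neq j$, giving the needed independent set. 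Once the structure of infinite abelian $p$-groups (via the decomposition into divisible plus reduced parts, and Prüfer's theorem on bounded abelian groups) is invoked, all cases are covered and the argument closes.
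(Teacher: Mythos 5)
Your part (i) is exactly the paper's argument: a clique of the non-commuting graph $\Gamma(G)$ is an independent set of $\mathcal{G}(G)$, so $\omega(\Gamma(G))\le\alpha(\mathcal{G}(G))<\infty$, and Neumann's theorem gives $[G:Z(G)]<\infty$. Your part (ii) starts along a different and perfectly viable route: reduce local finiteness of $G$ to local finiteness of $Z(G)$ using the finite index from (i), then show $Z(G)$ is periodic because an element $x$ of infinite order would give the infinite independent set $\{x^p : p\ \text{prime}\}$. (The paper instead applies (i) to an arbitrary finitely generated subgroup $H$, observes that $Z(H)$ is then a finitely generated abelian group, and rules out infinite cyclic summands using $\alpha(\mathcal{G}(\mathbb{Z}))=\infty$.)

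The last stage of your argument, however, contains a genuine error. You assert that the Pr\"ufer group $C_{p^\infty}$ has infinite independence number in its power graph, and you propose to find elements $b_i$ with $b_i\notin\langle b_j\rangle$ and $b_j\notin\langle b_i\rangle$. No such pair exists: any two elements of $C_{p^\infty}$ lie in a common finite cyclic $p$-group, whose subgroups form a chain, so one of the two always generates a subgroup containing the other. Thus $\mathcal{G}(C_{p^\infty})$ is a complete graph and $\alpha(\mathcal{G}(C_{p^\infty}))=1$; the paper uses precisely this fact in its example with $H=\{0\}\times C_{2^\infty}$, and Theorem~\ref{alphafinite} shows that infinite abelian groups of the form $C_{p^\infty}\times H$ do satisfy $\alpha(\mathcal{G}(G))<\infty$, so your intermediate conclusion that $Z(G)$ must be finite is false. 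Fortunately the error sits in an unnecessary detour: once $Z(G)$ is known to be periodic you are already done, since a periodic abelian group is locally finite (a finitely generated periodic abelian group is finite by the structure theorem), and your reduction then yields local finiteness of $G$. Everything after ``so $Z(G)$ is periodic'' should be deleted and replaced by that single observation.
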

\begin{proof}
{(i) First we note that if $x$ and $y$ are adjacent in $\Gamma(G)$, then $x$ and $y$ are not adjacent in $\mathcal{G}(G)$. Thus $\omega(\Gamma(G))\leq\alpha({\mathcal {G}}(G))<\infty$. Hence \cite[Theorem 6]{neuman} implies that $[G:Z(G)]<\infty$.\\
(ii) Let $H$ be a finitely generated subgroup of $G$. Then by (i) and \cite[1.6.11]{robinson}, $Z(H)$ is finitely generated, too. So by the fundamental theorem for finitely generated abelian groups we find that $Z(H)\cong \mathbb{Z}^n\times C_{q_1}\times\cdots \times C_{q_k}$, where $n$ and $k$ are non-negative integers and every $q_i$, $1\leq i\leq k$, is a power of a prime number. Since $\alpha({\mathcal {G}}(\mathbb{Z}))=\infty$, we deduce that $H$ is a finite group and so the proof is complete.}
\end{proof}

Now, we characterize those abelian groups whose power graphs have finite independence number. First we need the following theorem.
\begin{thm}{\rm(\cite[4.3.11]{robinson})}\label{rob4.3.11}
If $G$ is an abelian group which is not torsion-free, then it has a non-trivial
direct summand which is either cyclic or quasicyclic.
\end{thm}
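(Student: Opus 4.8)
The plan is to reduce, by elementary manipulations, either to a divisible quasicyclic subgroup or to a pure cyclic subgroup, and then to invoke the standard facts that divisible subgroups and bounded pure subgroups of an abelian group are direct summands (the latter being Kulikov's theorem).

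Since $G$ is not torsion-free it contains an element of order $p$ for some prime $p$; fix such a $p$ and let $T$ be the subgroup of all elements whose order is a power of $p$, so $T\neq 0$. Suppose first that $T$ is not reduced, i.e.\ has a non-trivial divisible subgroup $D$. Then $D$ is a non-trivial divisible $p$-group: picking $x_0\in D$ of order $p$ and, by divisibility, successively choosing $x_{i+1}$ with $px_{i+1}=x_i$, the $x_i$ generate a copy of $C_{p^\infty}$ inside $T$. Being divisible, this $C_{p^\infty}$ is a direct summand of $G$, and we are done. So assume $T$ is reduced. Then $pT\neq T$: otherwise $T$ would be $p$-divisible, hence divisible (multiplication by any prime $q\neq p$ is already a bijection of a $p$-group), contradicting reducedness. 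Choose $y\in T\setminus pT$, of order $p^m$ say. Then $y\notin pG$, since $y=pg$ with $g\in G$ would force $o(g)=p^{m+1}$, hence $g\in T$ and $y\in pT$, a contradiction. Thus $y$ has $p$-height $0$ in $G$.

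Now run the following purification loop. The cyclic subgroup $\langle y\rangle$ is pure in $G$ exactly when $p^iy$ has $p$-height precisely $i$ for every $i<m$; if this holds, stop. Otherwise pick the least $j$ with $p^jy\in p^{j+1}G$; since $y\notin pG$ we have $j\geq 1$, and necessarily $j<m$, so $p^jy\neq 0$ and $p^jy=p^{j+1}z$ for some $z\in G$. Replace $y$ by $y-pz$: this annihilates $p^jy$, so the new element has order dividing $p^j$, strictly smaller than before (in particular it is again of $p$-power order), and it still has $p$-height $0$, because $pz\in pG$ while $y\notin pG$ forces $y-pz\notin pG$. Since the order strictly decreases at each pass and never drops below $p$, the loop halts after finitely many steps, producing an element $y_0$ of $p$-power order with $\langle y_0\rangle$ pure in $G$. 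Being a bounded pure subgroup, $\langle y_0\rangle$ is a direct summand of $G$ by Kulikov's theorem, and it is a non-trivial cyclic group, which finishes the argument.

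The one ingredient that is not routine is Kulikov's theorem that a bounded pure subgroup of an abelian group is a direct summand; everything else — extracting $C_{p^\infty}$ from a divisible $p$-group, the splitting-off of divisible subgroups, and the height-lowering loop — is elementary. The delicate point is the loop itself: one must check that ``$\langle y\rangle$ is pure'' is precisely the negation of ``some $p^jy$ has height exceeding $j$, so $y$ can be replaced by an element of strictly smaller order'', and that the substitution $y\mapsto y-pz$ never raises the $p$-height.
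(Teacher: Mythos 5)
Your proof is correct. Note that the paper does not prove this statement at all: it is quoted verbatim from Robinson \cite[4.3.11]{robinson}, so there is no in-paper argument to compare against. What you have written is essentially the standard textbook proof of that result (pass to the $p$-primary component $T$; if $T$ is not reduced, extract a quasicyclic subgroup, which splits off by divisibility; if $T$ is reduced, find an element of height $0$ and run the height-correcting substitution $y\mapsto y-pz$ to obtain a pure finite cyclic subgroup, which splits off by Kulikov's theorem), and all the delicate points you flag --- the equivalence of purity of $\langle y\rangle$ with the heights of $p^iy$ being exactly $i$ for $i<m$, the preservation of height $0$ under the substitution, and the strict decrease of order that terminates the loop --- check out.
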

\begin{thm}\label{alphafinite}
Let $G$ be an abelian group such that $\alpha(\mathcal{G}(G))<\infty$. Then either $G$ is finite or $G\cong C_{p^\infty}\times H$, where $H$ is a finite group and $p\nmid |H|$.
\end{thm}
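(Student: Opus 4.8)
The plan is to bootstrap from Theorem~\ref{finiteindependent}, which tells us that a group with finite independence number is locally finite, and then use the structure theory of abelian groups (Theorem~\ref{rob4.3.11}) to pin down the torsion situation. First I would dispose of the torsion-free part: if $G$ has an element of infinite order, then $G$ contains a copy of $\mathbb{Z}$, and since $\alpha(\mathcal{G}(\mathbb{Z}))=\infty$ (the nonzero elements with pairwise non-comparable supports, e.g. $\{p : p \text{ prime}\}$, form an infinite independent set) and independence number is monotone under induced subgraphs, we would get $\alpha(\mathcal{G}(G))=\infty$, a contradiction. Hence $G$ is a torsion group, and being abelian it is the direct sum of its primary components $G = \bigoplus_p G_p$.

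Next I would show that only one primary component can be infinite, and in fact at most finitely many can be nontrivial. The key observation is that if $x$ and $y$ lie in coprime primary components (or more generally have coprime orders), then $\langle x \rangle \cap \langle y \rangle = 1$, so neither is a power of the other unless one is the identity; thus nonidentity elements of pairwise coprime order form an independent set in $\mathcal{G}(G)$. Therefore only finitely many primary components $G_{p_1},\dots,G_{p_k}$ are nontrivial. Moreover, within a single primary component $G_p$, if $G_p$ is infinite then I claim it contains an infinite independent set: being an infinite abelian $p$-group that is locally finite, either it has unbounded exponent — in which case one can pick elements $x_1, x_2, \dots$ with $o(x_{i})$ strictly increasing and $x_i \notin \langle x_j\rangle$ for $j<i$ (arrange via the direct-sum/basis structure so that no $x_i$ is a power of another) — or it has bounded exponent, in which case by Prüfer's theorem it is a direct sum of cyclic $p$-groups, and if infinitely many summands occur we take one generator from each to form an infinite independent set. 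Either way $\alpha(\mathcal{G}(G_p))=\infty$, contradiction. So every nontrivial primary component is finite, unless... we must leave room for the quasicyclic case.

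The subtle point — and the main obstacle — is to identify which infinite $p$-groups are actually allowed: we have just argued an infinite abelian $p$-group has infinite independence number *unless* it has the special feature that its finite subgroups are totally ordered by inclusion, which forces it to be $C_{p^\infty}$. Here is where I would invoke Theorem~\ref{rob4.3.11} cleanly: if $G$ is infinite (hence some $G_p$ is infinite), then $G$ is not torsion-free, so it has a direct summand $K$ that is cyclic or quasicyclic, and by peeling off summands and using that a direct sum of two nontrivial groups of coprime order, or of two nontrivial $p$-groups, yields an independent pair that can be extended, one shows $G$ itself must be $C_{p^\infty}$ for a single prime $p$, times a finite complement. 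Concretely: write $G = C_{p^\infty} \times H$ or $G = C_{p^n}\times H$ from the summand decomposition; in the cyclic case $H$ must then be infinite, so recurse on $H$; in the quasicyclic case, any prime $q \ne p$ dividing $|H|$ gives an element of order $q$ which together with the infinitely many elements of $p$-power order (which are pairwise comparable among themselves but) — careful — here one uses that $C_{p^\infty}$ has a *dominating vertex-free* structure: pick $y$ of order $q$ and $x_1 \subsetneq x_2 \subsetneq \cdots$ the chain in $C_{p^\infty}$; then $\{x_i y : i \ge 1\}$ is independent since $o(x_i y) = o(x_i)\cdot q$ and $x_i y$ is a power of $x_j y$ only if... one checks this fails. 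Thus $p \nmid |H|$ and $H$ is finite, giving $G \cong C_{p^\infty} \times H$ with $H$ finite and $p \nmid |H|$, as claimed. The delicate bookkeeping is precisely in verifying these "extend an independent pair to an infinite independent set" claims, so I would isolate a small lemma: \emph{in an abelian torsion group, if $x,y$ are nonidentity with $\langle x\rangle \cap \langle y \rangle = 1$, and $G$ is infinite, then $\alpha(\mathcal{G}(G)) = \infty$} — proved by combining the pair $\{x,y\}$ with a suitably chosen infinite family coming from the decomposition.
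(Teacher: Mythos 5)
Your overall skeleton --- rule out torsion-free elements via $\mathbb{Z}$, pass to the primary/summand decomposition, and use Theorem~\ref{rob4.3.11} to peel off cyclic or quasicyclic summands until finiteness of $\alpha$ stops the process --- is essentially the paper's route. But the verifications you defer or sketch are exactly where the content lies, and the decisive one is false as written. The key step is showing that $C_{p^\infty}\times C_p$ cannot occur, i.e.\ that $p\nmid|H|$. You instead take a prime $q\neq p$ dividing $|H|$, pick $y$ of order $q$ and a chain $x_1,x_2,\dots$ in $C_{p^\infty}$, and claim $\{x_iy : i\ge 1\}$ is independent. It is not: since $\gcd(o(x_j),q)=1$, the Chinese Remainder Theorem produces $k$ with $k\equiv 1\pmod q$ and $x_j^k=x_i$, so $x_iy$ is a power of $x_jy$ whenever $o(x_i)\mid o(x_j)$. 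This is precisely the CRT computation the paper uses in Theorem~\ref{nilpotentalpha} to prove that $C_{p^\infty}\times H$ with $H$ finite and $p\nmid|H|$ \emph{does} have finite independence number; your argument, if it worked, would force $H$ to be trivial and contradict that theorem. The case you actually need is $q=p$: inside $C_{p^\infty}\times C_p$ the set $\{(1/p^n+\mathbb{Z},\,1):n\geq1\}$ is independent, because being a power there would require $k\equiv1\pmod p$ and $k\equiv0\pmod p$ simultaneously. The obstruction comes from the shared prime, not from a different one.

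Second, you never exclude two quasicyclic summands. Your reduction aims to show each infinite primary component is $C_{p^\infty}$ (and even that dichotomy --- an infinite abelian $p$-group other than $C_{p^\infty}$ has infinite independence number --- is only asserted in the unbounded-exponent case, with a hand-wave about ``arranging via the basis structure''; you yourself flag this as the obstacle and then fall back on the summand-peeling, which is fine but must still be carried out). Nothing you say prevents $G_p\cong C_{p^\infty}$ and $G_q\cong C_{q^\infty}$ occurring simultaneously, which would make your $H$ infinite. The paper kills this with the explicit anti-diagonal sets $I_n=\{(1/p^i+\mathbb{Z},\,1/q^{n-i+1}+\mathbb{Z}):1\leq i\leq n\}$: distinct members have incomparable orders $p^iq^{n-i+1}$, so $I_n$ is an independent set of size $n$ for every $n$. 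With that lemma added and the $p\nmid|H|$ argument corrected to the $q=p$ case, your outline closes up into the paper's proof.
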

\begin{proof}
{If $G$ is torsion-free, then  $G$ contains $\mathbb{Z}$ and so $\alpha(\mathcal{G}(G))\geq\alpha(\mathcal{G}({\mathbb{Z}}))=\infty$, a contradiction. Thus by Theorem \ref{rob4.3.11}, $G=G_1\oplus H_1$, where $G_1$ is either cyclic or quasicyclic. If $H_1$ is trivial, then we are done. Otherwise, $\alpha(\mathcal{G}(H_1))<\infty$ implies that $H_1=G_2\oplus H_2$, where $G_2$ is either cyclic or quasicyclic. So $G=G_1\oplus G_2\oplus H_2$. By repeating this procedure and using $\alpha(\mathcal{G}(G))<\infty$, we deduce that there exists a positive integer $n$ such that $G\cong \bigoplus_{i=1}^n G_i$, where every $G_i$ is either cyclic or quasicyclic. We show that at most one $G_i$ is quasicyclic. By the contrary, suppose that $G$ contains the group $ C_{p^\infty}\times C_{q^\infty}$. It is not hard to see that for every positive integer $n$, $I_n=\{(1/p^i+\mathbb{Z},1/q^{n-i+1}+\mathbb{Z})\,:\, 1\leq i \leq n\}$ is an independent set of size $n$, a contradiction. So either $G\cong C_{p^\infty}\times \prod_{i=1}^n C_{p_i^{\alpha_i}}$ or $G\cong\prod_{i=1}^n C_{p_i^{\alpha_i}}$, where $p$ and $p_i$ are prime numbers. Now, suppose that the first case occurs. To complete the proof, we show that $p\neq p_i$, for every $i$, $1\leq i\leq n$. By contrary, suppose that $p=p_i$, for some $i$. Then $ C_{p^\infty}\times C_{p}$ is a subgroup of $G$. Since $ C_{p^\infty}\times\{1\}$ is an independent set in $\mathcal{G}( C_{p^\infty}\times  C_{p})$, we get a contradiction. So, the proof is complete.}
\end{proof}

\begin{thm}\label{alphapgroup}
Let $p$ be a prime number and $G$ be a $p$-group such that $\alpha(\mathcal{G}(G))<\infty$. Then either $G$ is finite or $G\cong C_{p^\infty}$.
\end{thm}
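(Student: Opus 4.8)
The plan is to feed $G$ into the two results already established: Theorem~\ref{finiteindependent} to control $Z(G)$, and Theorem~\ref{alphafinite} to identify abelian pieces. Since $\alpha(\mathcal{G}(G))<\infty$, Theorem~\ref{finiteindependent} gives $[G:Z(G)]<\infty$, and every element of the $p$-group $G$ has $p$-power, hence finite, order. Because $\mathcal{G}(H)$ is an induced subgraph of $\mathcal{G}(G)$ for every subgroup $H$, we have $\alpha(\mathcal{G}(Z(G)))\le\alpha(\mathcal{G}(G))<\infty$; as $Z(G)$ is an abelian $p$-group, Theorem~\ref{alphafinite} (noting that a $p$-group has no nontrivial $p'$-direct factor, so the finite factor $H$ appearing there is trivial) forces $Z(G)$ to be either finite or isomorphic to $C_{p^\infty}$.

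If $Z(G)$ is finite then $|G|=[G:Z(G)]\,|Z(G)|<\infty$, as required. So suppose $Z(G)\cong C_{p^\infty}$; the claim is that $G=Z(G)$. If not, I would take $g\in G\setminus Z(G)$ and set $K=\langle g\rangle Z(G)$. Since $Z(G)$ is normal this is a subgroup; since $Z(G)$ is central it is abelian; being a subgroup of the $p$-group $G$ it is a $p$-group; it is infinite (it contains $C_{p^\infty}$); and $K\ne Z(G)$, since otherwise $g\in Z(G)$, so $Z(G)\subsetneq K$. Applying Theorem~\ref{alphafinite} to $K$ (again $\alpha(\mathcal{G}(K))\le\alpha(\mathcal{G}(G))<\infty$, and $K$ is infinite) gives $K\cong C_{q^\infty}\times H$ with $H$ finite and $q\nmid|H|$; since $K$ is a $p$-group this forces $q=p$ and $H$ trivial, i.e.\ $K\cong C_{p^\infty}$. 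But then $K$ has an infinite proper subgroup $Z(G)$, contradicting the fact that every proper subgroup of $C_{p^\infty}$ is finite.

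This contradiction shows that no such $g$ exists, so $G=Z(G)\cong C_{p^\infty}$, completing the proof. I do not expect a genuine obstacle here: the only points needing care are the two uses of the observation that a $p$-group landing in the conclusion of Theorem~\ref{alphafinite} must be exactly $C_{p^\infty}$, and the elementary structural fact that $C_{p^\infty}$ has no infinite proper subgroup, which is what rules out the final case.
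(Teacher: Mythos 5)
Your proposal is correct and follows essentially the same route as the paper: reduce to $Z(G)$ via Theorem~\ref{finiteindependent}, apply Theorem~\ref{alphafinite} to the abelian $p$-group $Z(G)$, and in the case $Z(G)\cong C_{p^\infty}$ derive a contradiction from a non-central element $g$ by applying Theorem~\ref{alphafinite} to $\langle g\rangle Z(G)$ and using the fact that $C_{p^\infty}$ has no infinite proper subgroup. Your write-up is if anything slightly more careful than the paper's in spelling out why the finite factor in Theorem~\ref{alphafinite} must vanish for a $p$-group.
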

\begin{proof}
{Since $\alpha(\mathcal{G}(G))<\infty$, we deduce that $\alpha(\mathcal{G}(Z(G)))<\infty$. Thus by Theorem \ref{alphafinite}, either $Z(G)$ is finite or $Z(G)\cong  C_{p^\infty}$, for some prime number $p$. If $Z(G)$ is finite, then by Theorem \ref{finiteindependent}, $G$ is finite. Now, suppose that $Z(G)\cong  C_{p^\infty}$. To complete the proof, we show that $G$ is abelian. To the contrary, suppose that there exists $a\in G\setminus Z(G)$. Let $H=\langle Z(G)\cup\{a\}\rangle$. Clearly, $H$ is an abelian $p$-subgroup of $G$ and $\alpha(\mathcal{G}(H))<\infty$. So, by Theorem \ref{alphafinite}, $H\cong C_{p^\infty}\cong Z(G)$. Since every proper subgroup of $ C_{p^\infty}$ is finite, we get a contradiction. Hence $G$ is abelian and $G=Z(G)\cong C_{p^\infty}$.}
\end{proof}
Now, we exploit Theorem \ref{alphapgroup} to extend Theorem \ref{alphafinite} to nilpotent groups. 
\begin{remark}\label{decomposition}
Let $H$ and $K$ be two subgroups of $G$. If $H\cap K=\{e\}$, $G=HK$ and $H\subseteq Z(G)$, then $G\cong H\times K.$
\end{remark}

\begin{thm}\label{nilpotentalpha}
Let $G$ be an infinite nilpotent group. Then $\alpha(\mathcal{G}(G))<\infty$ if and only if $G\cong   C_{p^\infty}\times H$, for some prime number $p$, where $H$ is a finite group and $p\nmid |H|$.
\end{thm}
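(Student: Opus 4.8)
The plan is to prove the two implications separately. For the ``if'' direction, suppose $G\cong C_{p^\infty}\times H$ with $H$ finite and $p\nmid|H|$; I claim that $\alpha(\mathcal{G}(G))$ is bounded by the number of cyclic subgroups of $H$, hence finite. The key observation is that any two elements $(a_1,h_1)$ and $(a_2,h_2)$ of $G$ with $\langle h_1\rangle=\langle h_2\rangle$ are already adjacent in $\mathcal{G}(G)$: since the subgroups of $C_{p^\infty}$ form a chain, we may assume $a_1\in\langle a_2\rangle$, say $a_1=a_2^{k}$, and also $h_1=h_2^{s}$ for some $s$; as $o(a_2)$ is a power of $p$ while $o(h_2)$ divides $|H|$, these orders are coprime, so by the Chinese Remainder Theorem there is an integer $t$ with $t\equiv k\pmod{o(a_2)}$ and $t\equiv s\pmod{o(h_2)}$, and then $(a_2,h_2)^{t}=(a_1,h_1)$. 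Consequently, on any independent set of $\mathcal{G}(G)$ the assignment $(a,h)\mapsto\langle h\rangle$ is injective, which gives the desired bound. (Note that nilpotency of $G$ is not needed here.)

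For the ``only if'' direction, assume $G$ is infinite nilpotent with $\alpha(\mathcal{G}(G))<\infty$. By Theorem~\ref{finiteindependent}, $[G:Z(G)]<\infty$ and $G$ is locally finite, hence periodic; and since $G$ is infinite, $Z(G)$ is an infinite abelian group. Applying Theorem~\ref{alphafinite} to $Z(G)$ yields $Z(G)\cong C_{p^\infty}\times A$ for some prime $p$ and some finite $A$ with $p\nmid|A|$. Now invoke the standard fact (see \cite{robinson}) that a periodic nilpotent group is the direct product of its primary components, $G=\bigoplus_{q}G_q$ where $G_q$ denotes the subgroup of elements of $q$-power order. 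Then $Z(G)=\bigoplus_q Z(G_q)$, and comparing with $Z(G)\cong C_{p^\infty}\times A$ (and the primary decomposition of the finite abelian group $A$) shows $Z(G_p)\cong C_{p^\infty}$ while $Z(G_q)$ is finite for every prime $q\neq p$.

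Each $G_q$ is a nilpotent $q$-group with $\alpha(\mathcal{G}(G_q))\le\alpha(\mathcal{G}(G))<\infty$, so Theorem~\ref{alphapgroup} forces $G_q$ to be finite or isomorphic to $C_{q^\infty}$. For $q\neq p$ the latter would make $Z(G_q)=G_q$ infinite, a contradiction, so $G_q$ is finite; for $q=p$, since $Z(G_p)\cong C_{p^\infty}$ is infinite, $G_p$ is infinite and hence $G_p\cong C_{p^\infty}$, so in particular $G_p$ is abelian, central in $G$, and a Pr\"ufer $p$-group. Next, only finitely many $G_q$ can be non-trivial: if $G_{q_1},G_{q_2},\dots$ were non-trivial for infinitely many distinct primes $q_i$, then picking $e\neq x_i\in G_{q_i}$ produces an infinite independent set of $\mathcal{G}(G)$, because two non-identity elements of coprime orders cannot be powers of one another. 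Therefore $G=G_p\times H$ with $H=\bigoplus_{q\neq p}G_q$ finite and $p\nmid|H|$; equivalently, one applies Remark~\ref{decomposition} to the central subgroup $G_p\cong C_{p^\infty}$ and the complementary $p'$-subgroup $H$. Since $[G:G_p]\le[G:Z(G)]\cdot|A|<\infty$ this $H$ is indeed finite, completing the proof.

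I expect the main obstacle to lie in the bookkeeping of the ``only if'' direction: namely, in extracting from the single hypothesis $\alpha(\mathcal{G}(G))<\infty$ both that no primary component $G_q$ with $q\neq p$ is an infinite Pr\"ufer group and that there are only finitely many non-trivial $G_q$ — each of these is handled by exhibiting an explicit infinite independent set — together with keeping the central/primary decompositions consistent so that Remark~\ref{decomposition} applies. The ``if'' direction is essentially the Chinese Remainder Theorem trick above and should be routine.
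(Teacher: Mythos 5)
Your proof is correct, and in both directions it is organized differently from the paper's. In the \emph{if} direction you and the paper both rely on the Chinese Remainder Theorem, but you turn it into a positive adjacency statement --- any two vertices of $C_{p^\infty}\times H$ whose $H$-components generate the same cyclic subgroup are comparable in the power graph --- which gives the explicit bound $\alpha(\mathcal{G}(G))\le$ (number of cyclic subgroups of $H$); the paper instead supposes an infinite independent set, pigeonholes on the $H$-coordinate and on residues of the $p^{\alpha_n}$ modulo $o(g)$, and derives a contradiction. Your version is cleaner and yields more information. In the \emph{only if} direction both arguments use the same three ingredients (Theorems \ref{finiteindependent}, \ref{alphafinite} and \ref{alphapgroup}), but the paper writes $G=Z(G)H$ with $H$ a finite finitely generated supplement, splits $H$ into its Sylow subgroups, shows the $p$-part is absorbed into the quasicyclic summand of $Z(G)$, and then applies Remark \ref{decomposition}; you instead quote the primary decomposition of a periodic nilpotent group, $G=\mathrm{Dr}_q\,G_q$, deduce $Z(G_p)\cong C_{p^\infty}$ by comparing primary components of $Z(G)$, identify $G_p\cong C_{p^\infty}$ via Theorem \ref{alphapgroup}, and kill the possibility of infinitely many nontrivial components with an explicit independent set of elements of pairwise coprime orders. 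Your route buys a more transparent global structure at the cost of invoking the (standard, e.g.\ Robinson 5.2.7) torsion--primary decomposition theorem for nilpotent groups, whereas the paper works more directly from the consequence $[G:Z(G)]<\infty$ of Theorem \ref{finiteindependent}. Both arguments are complete.
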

\begin{proof}
{First suppose that $G\cong   C_{p^\infty}\times H$, where $H$ is a finite group and $p\nmid |H|$.  Suppose to the contrary, $\{(s_n/p^{\alpha_n}+\mathbb{Z},g_n)\,:\, n\geq 1, s_n\in\mathbb{Z}, p\nmid s_n\,{\rm and}\,g_n\in H \}$ is an infinite independent
set of $\mathcal{G}(G)$. Since $H$ is a finite group, there exists $g\in H$ such that the infinite set
$$\{(s_n/p^{\alpha_n}+\mathbb{Z},g)\,:\, n\geq 1, s_n\in\mathbb{Z}\ {\rm and}\ p\nmid s_n \}$$
forms an independent set. Since $o(g)<\infty$, there exist $\alpha_i$ and $\alpha_j$ such that $p^{\alpha_i}\equiv p^{\alpha_j}({\rm mod
}\, o(g))$ and $\alpha_i>\alpha_j$. On the other hand, we know that $\gcd(s_i,p)=1$. So, let $t_i$ be the multiplicative inverse of $s_i$ in $ C_{p^{\alpha_j}}$. Thus by Chinese Reminder Theorem, there exists a positive integer $x$ such that $x\equiv t_is_j({\rm mod}\, p^{\alpha_j})$ and $x\equiv p^{\alpha_j-\alpha_i}({\rm mod}\,o(g))$. Therefore, we have
$$p^{\alpha_i-\alpha_j}x\frac{s_i}{p^{\alpha_i}}-\frac{s_j}{p^{\alpha_j}}=\frac{s_ix-s_j}{p^{\alpha_j}}\in \mathbb{Z},\hspace{.5cm} g^{xp^{\alpha_i-\alpha_j}}=g.$$
Thus, $(s_i/p^{\alpha_i}+\mathbb{Z},g)$  and $(s_j/p^{\alpha_j}+\mathbb{Z},g)$ are adjacent, a contradiction.

Conversely, suppose that $\alpha(\mathcal{G}(G))<\infty$. Then by Theorem \ref{finiteindependent}, $[G:Z(G)]<\infty$ and so $G=Z(G)H$, where $H$ is a finitely generated subgroup of $G$. Now, Theorem \ref{finiteindependent} implies that $H$ is finite. By Theorem \ref{alphafinite}, $Z(G)=AB$, where $A\cong C_{p^\infty}$ and $B$ is a finite group such that $p\nmid |B|$. Also, since $H$ is nilpotent, we have $H\cong H_pH_{p_1}\cdots H_{p_t}$, where $H_p$ and $H_{p_i}$ ($1\leq i\leq t$) are sylow $p$-subgroup and sylow $p_i$-subgroup of $H$, respectively. We show that $H_p\subseteq A$. To the contrary, suppose that $x\in H_p\setminus A$. Then $\langle  A,x\rangle$ is a $p$-group and so by Theorem \ref{alphapgroup},  $\langle A,x\rangle\cong  C_{p^\infty}\cong \langle A\rangle$. Since every proper subgroup of $ C_{p^\infty}$ is finite, we get a contradiction. Thus $H_p\subseteq Z(G)$ and so $G=ABH_{p_1}\cdots H_{p_t}$. Since $G$ is nilpotent, $BH_{p_1}\cdots H_{p_t}$ is a finite subgroup of $G$ and $p\nmid |BH_{p_1}\cdots H_{p_t}|$. Hence by Remark \ref{decomposition}, $G\cong A\times BH_{p_1}\cdots H_{p_t}$, as desired.~}
\end{proof}

\subsection{The colouring of power graphs}\label{colouring}

Let $G$ be a group. In this section, we first show that the chromatic number of the power graph of $G$ is finite if and only if the clique number of the power graph of $G$ is finite and this statement is also equivalent to that the exponent of $G$ is finite. Then it is proved that the clique number of the power graph of $G$ is at most countable. Finally, it is shown that the power graph of every bounded exponent group is perfect.

\begin{lem}\label{chrombond}
Let $G$ be a group. If $\omega(\mathcal{G}(G))$ is finite, then $G$ is of bounded exponent. 
\end{lem}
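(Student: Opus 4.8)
The plan is to prove the contrapositive: if $G$ is \emph{not} of bounded exponent, then $\mathcal{G}(G)$ contains a clique of every finite size, so $\omega(\mathcal{G}(G))=\infty$. Note that $G$ fails to have bounded exponent precisely when one of the following holds: (a) $G$ has an element of infinite order; or (b) $G$ is periodic but the orders of its elements are unbounded (otherwise, if all elements have finite order bounded by $N$, then $g^{\operatorname{lcm}(1,\dots,N)}=e$ for all $g$). I would treat these two cases separately.

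Case (a) is immediate: if $g\in G$ has infinite order, then the elements $g, g^{2}, g^{2^{2}},\dots,g^{2^{k}}$ are pairwise distinct (the exponents differ and $g$ has infinite order) and pairwise adjacent in $\mathcal{G}(G)$, since for $i\le j$ we have $g^{2^{j}}=(g^{2^{i}})^{2^{j-i}}$. This is a clique of size $k+1$ for every $k$.

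Case (b) is where one must be slightly careful: it is \emph{not} true that an arbitrary cyclic subgroup $C_{n}$ is a clique of $\mathcal{G}$ (for instance, in $C_{6}$ the elements of order $2$ and $3$ are non-adjacent), so the reduction must be to \emph{prime-power} order. First, if the prime powers $p^{a}$ exactly dividing element orders were all bounded by some fixed $M$, then every element order would divide $\operatorname{lcm}\{q:\, q\le M,\ q\text{ a prime power}\}$ and hence be bounded, contradicting (b); so there exist elements whose order is divisible by arbitrarily large prime powers. Given such a $g$, with $p^{a}$ the exact power of $p$ dividing $o(g)$ and $p^{a}$ large, the element $h=g^{o(g)/p^{a}}$ has order $p^{a}$ and $\langle h\rangle\cong C_{p^{a}}$. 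Since the subgroups of a cyclic $p$-group are totally ordered by inclusion and each element generates the subgroup of its own order, any two elements of $\langle h\rangle$ are comparable under ``is a power of,'' so $\langle h\rangle$ is a clique of size $p^{a}$ in $\mathcal{G}(G)$. Letting $p^{a}\to\infty$ completes the case, and hence the proof.

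The main obstacle is case (b): the point is to realize that ``unbounded orders'' must be witnessed by \emph{unbounded prime-power divisors} of orders, and to pass to a cyclic subgroup of prime-power order (rather than a general cyclic subgroup) so that one actually obtains a clique. The rest is routine.
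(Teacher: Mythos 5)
Your proof is correct. The interesting comparison is that the paper needs no case split at all: its single construction is to take, for each $k$, an element $g_k$ with $o(g_k)>2^k$ (which exists whether the failure of bounded exponent comes from an element of infinite order or from unbounded finite orders) and observe that $\{g_k^{2^i}\,:\,0\le i\le k\}$ is a clique of size $k+1$ --- the elements form a chain under ``is a power of,'' and they are pairwise distinct because $0<2^j-2^i<2^k<o(g_k)$ for $0\le i<j\le k$. This is exactly your Case (a) argument, and the point you may have missed is that it does not require $g_k$ to have infinite order, only order exceeding $2^k$; so your Case (b) is subsumed. Your Case (b) detour --- showing that unbounded orders force unbounded prime-power divisors of orders, and then passing to a cyclic subgroup of prime-power order, which is totally ordered and hence a clique --- is correct and is motivated by the right worry (a general cyclic subgroup such as $C_6$ is not a clique in the power graph), but the geometric chain $g,g^2,g^4,\dots$ sidesteps that worry more cheaply. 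What your route buys is slightly more structural information (the cliques you produce are entire cyclic $p$-subgroups of unbounded order, i.e.\ complete induced subgraphs of unbounded size that are subgroups), which is not needed for the lemma itself. Both arguments, like the paper's, invoke the Axiom of Choice for families of finite sets when selecting one witness for each $k$, as the paper remarks.
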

\begin{proof} {By the contrary, suppose that $G$ is not of bounded exponent. Then for every positive integer $k$, there is an element $g_k\in G$ such that $o(g_k)>2^k$. So one can easily show that $\{g_k^{2^i}\,|\,0\leq i\leq k\}$ is a clique of size $k+1$ in ${\mathcal {G}}(G)$. This implies that $\omega({\mathcal {G}}(G))=\infty$, a contradiction. The proof is complete. 
}
\end{proof}

\begin{remark} The proof uses the Axiom of Choice for families of finite sets. 
\end{remark}

\begin{cor}
Let $G$ be an abelian group and $\omega({\mathcal{G}}(G))<\infty$. Then there are some positive integers $r$ and $n_i$ and sets $I_i$, $1\leq i\leq r$ such that $$G\cong \prod_{i=1}^r\prod _{I_i} C_{n_i}.$$
\end{cor}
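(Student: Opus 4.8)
The plan is to split the argument into a one-line graph-theoretic reduction followed by an appeal to classical abelian group theory. First I would apply Lemma~\ref{chrombond}: since $\omega(\mathcal{G}(G))<\infty$, the group $G$ has bounded exponent, so there is an integer $n$ with $g^n=e$ for all $g\in G$, and we may take $n$ to be the exponent of $G$. After this step the statement is no longer about graphs at all; it merely asserts that an abelian group of finite exponent is a direct sum of finitely many ``boxes'' $\prod_{I_i}C_{n_i}$.

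Next I would invoke the structure theorem for abelian groups of bounded exponent (the Pr\"ufer--Baer theorem, see \cite[4.3.5]{robinson}): every such group is a direct sum of cyclic groups. To see how the finitely-many-boxes form arises, write $n=p_1^{a_1}\cdots p_k^{a_k}$ and take the primary decomposition $G=\bigoplus_{j=1}^{k}G_{p_j}$; only the finitely many primes dividing $n$ occur, and each component $G_{p_j}$ has exponent $p_j^{a_j}$, hence is a direct sum of cyclic groups each of order $p_j^{b}$ with $1\le b\le a_j$.

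The last step is pure bookkeeping. In the resulting direct-sum decomposition of $G$ into cyclic groups, every summand has order in the finite set $\{p_j^{b}:1\le j\le k,\ 1\le b\le a_j\}$. Let $n_1,\dots,n_r$ list the orders that actually occur, and for each $i$ let $I_i$ index the cyclic summands isomorphic to $C_{n_i}$; collecting summands of each type gives
\[
G\;\cong\;\prod_{i=1}^{r}\ \prod_{I_i}C_{n_i},
\]
where, as in the statement, $\prod_{I_i}$ denotes the direct sum (restricted direct product) of copies of $C_{n_i}$.

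I expect no real obstacle: all the content sits in the Pr\"ufer--Baer theorem, which I would simply cite. If instead a self-contained proof were wanted, the delicate point would be proving that a \emph{bounded} abelian $p$-group decomposes as a direct sum of cyclic groups; for infinite groups this requires a Zorn's-lemma argument producing a maximal pure independent set, and it is exactly boundedness that makes a cyclic subgroup of maximal order pure, hence a direct summand. For finite $G$ there is nothing to prove beyond the usual fundamental theorem of finite abelian groups.
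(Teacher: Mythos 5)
Your proposal is correct and follows exactly the paper's route: Lemma~\ref{chrombond} gives bounded exponent, and the Pr\"ufer--Baer theorem \cite[4.3.5]{robinson} then yields the decomposition into direct sums of cyclic groups of the finitely many orders dividing the exponent. The extra bookkeeping you supply (primary decomposition and grouping summands by order) is a harmless elaboration of what the paper leaves implicit.
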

\begin{proof}
{Since $\omega({\mathcal{G}}(G))<\infty$, by Lemma \ref{chrombond}, $G$ is  bounded exponent. So the assertion follows from Pr\"{u}fer-Baer Theorem (see \cite[4.3.5]{robinson}).}
\end{proof}

\begin{thm}\label{conuntclique}
The clique number of the power graph of any group is at most countably infinite.
\end{thm}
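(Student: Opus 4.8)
I want to show $\omega(\mathcal{G}(G)) \le \aleph_0$ for every group $G$. If $\omega(\mathcal{G}(G))$ is finite there is nothing to prove, so assume it is infinite and let $C$ be a clique. Since every pair of elements of $C$ is joined by an edge, for $x,y \in C$ either $x \in \langle y \rangle$ or $y \in \langle x \rangle$; in particular $x$ and $y$ commute, so $\langle C \rangle$ is an abelian subgroup. Moreover every element of $C$ has finite order: if some $x \in C$ had infinite order, then for any $y \in C$ adjacent to $x$, either $x$ is a power of $y$ (forcing $y$ to have infinite order) or $y$ is a power of $x$; one checks that $\langle x \rangle \cong \mathbb{Z}$ has the property that any two elements, one a power of the other, generate a cyclic group containing only finitely many "compatible" neighbours — more cleanly, $\alpha(\mathcal{G}(\mathbb{Z})) = \infty$ while a clique inside a copy of $\mathbb{Z}$ forces all its elements to be powers of a single generator, which is impossible for an infinite set. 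So I may assume $C$ consists of torsion elements lying in an abelian group, and I replace $G$ by the torsion abelian group $\langle C \rangle$.

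Now $A := \langle C \rangle$ is a torsion abelian group, hence $A = \bigoplus_p A_p$ is the direct sum of its primary components. The key structural fact is: a clique in the power graph of an abelian torsion group must be "concentrated" in a controlled way. Fix $x, y \in C$; writing each in terms of its primary decomposition, $x = (x_p)_p$ and $y = (y_p)_p$, the relation "$x$ is a power of $y$" holds iff $x_p$ is a power of $y_p$ for every $p$ and, crucially, the supports interact. The cleanest route: for each prime $p$, consider the projection $\pi_p : C \to A_p$. I claim the image $\pi_p(C)$ is a chain under the relation "one is a power of the other" inside $A_p$, and a chain of $p$-elements in which each is a power of the next is totally ordered by the cyclic subgroups they generate, so it is order-isomorphic to a subset of the subgroup lattice of a Prüfer group $C_{p^\infty}$ — which is countable (indeed a chain of type $\omega+1$ at most). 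Then $C$ embeds into the product $\prod_p \pi_p(C)$ of countably many countable chains, but an element of $C$ has finite support (only finitely many nonzero $\pi_p$), so $C$ injects into the set of finitely-supported sequences, a countable union of finite products of countable sets — hence countable.

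**Main obstacle.** The delicate point is verifying that $\pi_p(C)$ is genuinely a chain under the power relation, and that the map $C \to \prod_p \pi_p(C)$ is injective — a priori two distinct clique elements could have the same projection to every primary component only if they are equal (which is fine, since the direct sum decomposition is unique), so injectivity is automatic once we know $A$ is the direct sum of the $A_p$. The real work is the chain claim: given $x, y \in C$ with, say, $x = y^k$ in $A$, I must deduce $x_p = y_p^k$ in $A_p$, and then argue that within $A_p$ the set $\{\langle z \rangle : z \in \pi_p(C)\}$ is totally ordered by inclusion. This follows because if $x_p = y_p^k$ then $\langle x_p \rangle \subseteq \langle y_p \rangle$; and for a clique, for any two $z, w \in \pi_p(C)$ one of $z \in \langle w\rangle$ or $w \in \langle z\rangle$ holds (pulled back from the clique relation in $A$, using that the power relation descends to each coordinate). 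A totally ordered family of finite cyclic $p$-subgroups, ordered by inclusion, is countable because the lattice of finite subgroups of a $p$-group that form a chain is order-embeddable into $(\mathbb{N}, \le)$. Assembling: $|C| \le |\bigcup_{F \text{ finite set of primes}} \prod_{p \in F} \pi_p(C)| \le \aleph_0 \cdot \aleph_0 = \aleph_0$.

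**Alternative cleaner approach.** It may be slicker to avoid the primary decomposition entirely: take a clique $C$, note all elements are torsion (as above), and well-order $C$. For $x,y \in C$ adjacent, $\langle x \rangle$ and $\langle y \rangle$ are comparable cyclic groups or at least one contains the other's generator — actually in a clique $\{\langle x\rangle : x \in C\}$ need not be a chain (e.g. $C_6$ has elements of order $2, 3, 6$ with $\langle a\rangle, \langle b\rangle$ incomparable yet $a, b$ both powers of the generator of order $6$). But each $\langle x \rangle$ is contained in $\langle g \rangle$ for a suitable $g$ only if a top element exists. Lacking a top element, one still has: $C$ is covered by countably many cyclic subgroups — precisely, let $\mathcal{D} = \{\langle x \rangle : x \in C\}$; for any finite $F \subseteq C$, $\langle F \rangle$ is finite cyclic (pairwise one a power of the other, in an abelian group, generates a cyclic group — this needs a short argument), so $C$ is a directed union of finite cyclic subgroups, hence $\langle C \rangle$ is locally cyclic, hence isomorphic to a subgroup of $\mathbb{Q}/\mathbb{Z}$, which is countable; therefore $|C| \le |\langle C \rangle| \le \aleph_0$. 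I expect this to be the intended proof, and the only nontrivial lemma is that finitely many torsion elements of an abelian group, pairwise compatible under the power relation, generate a cyclic group — provable by induction on the number of generators using that $\langle a \rangle + \langle b \rangle$ is cyclic when $a$ is a power of $b$ or vice versa, combined with the classification of finite subgroups of $\mathbb{Q}/\mathbb{Z}$.
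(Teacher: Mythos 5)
Your argument has a genuine gap at its foundation: the claim that every element of a clique $C$ has finite order is false, and the justification you give for it does not hold up. In the infinite cyclic group $\mathbb{Z}$ the set $\{1,2,4,8,\dots\}$ (equivalently, $\{g^{2^i}:i\ge 0\}$ for any $g$ of infinite order) is an infinite clique all of whose elements have infinite order, since for $i<j$ the element $2^j$ is a multiple of $2^i$. So it is simply not true that an infinite clique inside a copy of $\mathbb{Z}$ is impossible; the fact that $\alpha(\mathcal{G}(\mathbb{Z}))=\infty$ says nothing about cliques. Both of your routes lean on this false reduction --- the primary-decomposition argument needs $\langle C\rangle$ to be a torsion abelian group, and the ``cleaner'' alternative needs the embedding of $\langle C\rangle$ into $\mathbb{Q}/\mathbb{Z}$ --- so neither is complete as written. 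A second, smaller slip: your worry in the alternative approach that $\{\langle x\rangle : x\in C\}$ need not be a chain (the $C_6$ example) concerns the \emph{enhanced} power graph; in the power graph adjacency means $x\in\langle y\rangle$ or $y\in\langle x\rangle$, so the cyclic subgroups generated by clique elements \emph{are} totally ordered by inclusion, and your elements of orders $2$ and $3$ in $C_6$ are not even adjacent.

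The second slip in fact repairs the first: since $\{\langle x\rangle : x\in C\}$ is a chain under inclusion, $\bigcup_{x\in C}\langle x\rangle$ is a locally cyclic subgroup containing $C$, and a locally cyclic group embeds in $\mathbb{Q}$ (torsion-free case) or in $\mathbb{Q}/\mathbb{Z}$ (torsion case), hence is countable --- no primary decomposition and no torsion hypothesis needed. That repaired argument is close in spirit to the paper's later treatment of cliques in the enhanced power graph, where the analogous step genuinely requires the nontrivial Lemma \ref{2-cyclic-3-cyclic}. The paper's own proof of Theorem \ref{conuntclique} is more elementary still and avoids all of this: fix $x\in C$; the elements of $C$ that are powers of $x$ are countably many, and for each $n$ the set $C(n)=\{y\in C : x=y^n\}$ is countable because, as soon as it has two elements, all of its elements have finite order, and any two of the same order generate the same cyclic subgroup, so each order class is finite.
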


\begin{proof} {Let $C$ be a clique in the power graph of $G$, and take
$x\in C$. Then the remaining vertices $y$ of $C$ are of two types:
\begin{itemize}
\item $y=x^n$ for some $n$;
\item $x=y^n$ for some $n$.
\end{itemize}
Clearly, there are at most countably many of the first type. We denote the set of  vertices of the second type by $C(n)$. We show that $C(n)$ is at most countably infinite. If there is only one $y$ in $C(n)$, then there is nothing to prove; so suppose there
are at least two elements in $C(n)$. We claim that every element in $C(n)$ has finite order. Choose $y,y'\in C(n)$. With no loss of generality, one can assume that $y'=y^k$, for some positive integer $k$. So $y^{(k-1)n}=1$. This implies that the orders of both $y$ and $y'$ are finite. Thus the claim is proved. Now, for every positive integer $k$, define $C(n,k)=\{y\in C(n)\,|\,o(y)=k\}$. By the claim, $C(n)=\bigcup_{k\geq1}C(n,k)$.  It is not hard to show that for every $a,b\in C(n,k)$, $\langle a\rangle=\langle b\rangle$ and so $C(n,k)$ is finite. Therefore, $C(n)$ is at most countably infinite.}
\end{proof}

We wonder if the same result holds for the chromatic number:  Does the power graph of every group have a countable chromatic number? 
A group is called a {\it pcc-group} if its power
graph has at most countable chromatic number. Free groups have this property by the next theorem. 
\begin{thm}\label{torsionpcc2} Every free group is a pcc-group.
\end{thm}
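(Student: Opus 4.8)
The plan is to exploit the structure of the power graph of a torsion-free group: since every non-identity element has infinite order, the power graph has a very rigid shape. First I would observe that in $\mathcal{G}(G)$ with $G$ torsion-free, two non-identity elements $x$ and $y$ are adjacent if and only if one is a power of the other, and for each non-identity $x$ the relation "$x$ and $y$ generate the same cyclic subgroup" picks out exactly $\{x, x^{-1}\}$ (since in an infinite cyclic group only $\pm 1$ are generators). Consequently the connected components of $\mathcal{G}(G) \setminus \{e\}$ are in bijection with the maximal cyclic subgroups of $G$, and each component is the power graph of $\mathbb{Z}$ with $0$ deleted, possibly after identifying $x$ with $x^{-1}$ -- more precisely, each component is a subgraph of $\mathcal{G}(\mathbb{Z})\setminus\{0\}$. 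So it suffices to colour $\mathcal{G}(\mathbb{Z})\setminus\{0\}$ with countably many colours and then re-use the same colouring on every component, finally giving $e$ its own fresh colour (the identity is adjacent to everything, so one extra colour handles it).

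Next I would produce an explicit countable proper colouring of $\mathcal{G}(\mathbb{Z})\setminus\{0\}$, equivalently of the positive integers $\{1,2,3,\dots\}$ under the divisibility relation (since $m$ and $-m$ can receive the same colour -- they are non-adjacent once $e$ is removed, as $-m$ is not a power of $m$ for $|m|>1$, and for $|m|=1$ we have the two vertices $1,-1$ which are non-adjacent in the punctured graph). Colouring $(\mathbb{N}, \mid)$ properly with countably many colours is routine: the map $n \mapsto \Omega(n)$, the number of prime factors of $n$ counted with multiplicity, is a proper colouring, because if $a \mid b$ and $a \neq b$ then $\Omega(a) < \Omega(b)$. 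This uses only countably many colours (indexed by $\mathbb{N}$), and it does not even need the Axiom of Choice.

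Finally I would assemble these pieces: fix the colouring $c_0$ of $\mathcal{G}(\mathbb{Z})\setminus\{0\}$ just described; for each component $\Delta$ of $\mathcal{G}(G)\setminus\{e\}$ choose (here one genuinely invokes a choice of generator, though a canonical one exists since each component corresponds to a maximal cyclic subgroup and we may pick either generator) an isomorphism of $\Delta$ onto an induced subgraph of $\mathcal{G}(\mathbb{Z})\setminus\{0\}$ sending a generator to $1$, pull back $c_0$, and colour $e$ with a colour not used by $c_0$. Adjacent vertices lie in the same component (or involve $e$), so they receive distinct colours; the total palette is countable. I do not expect a serious obstacle here: the only subtlety is checking that each component of the punctured power graph really does embed into $\mathcal{G}(\mathbb{Z})\setminus\{0\}$, which follows because a maximal cyclic subgroup $\langle g\rangle \cong \mathbb{Z}$ carries its full induced power graph and no element outside it can be a power of, or have as a power, an element of it without lying in it. The mild technical point worth a sentence is the identification of $g^k$ with $g^{-k}$: these two elements are non-adjacent in the punctured graph, and every other adjacency inside the component is inherited from $\mathbb{Z}$, so giving them the common colour $c_0(|k|)$ is legitimate.
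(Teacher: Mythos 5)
Your overall strategy -- decompose the punctured power graph into components, each a copy of $\mathcal{G}(\mathbb{Z})\setminus\{0\}$, colour every component with the same countable palette, and give $e$ a fresh colour -- is exactly the paper's strategy. But the step on which everything rests is not justified: you derive the claim ``the connected components of $\mathcal{G}(G)\setminus\{e\}$ are in bijection with the maximal cyclic subgroups of $G$, each carrying the power graph of $\mathbb{Z}$'' as a \emph{consequence of torsion-freeness}, and nowhere do you actually use that $G$ is free. That implication is false for torsion-free groups in general: in $\mathbb{Q}$ the punctured power graph is connected (any $p/q$ and $r/s$ are both integer multiples of $1/(qs)$), yet $\mathbb{Q}$ has no maximal cyclic subgroups at all, so both the claimed bijection and the claimed embedding of each component into $\mathcal{G}(\mathbb{Z})\setminus\{0\}$ fail. (If your argument were correct as written it would prove that every torsion-free group is a pcc-group, which goes well beyond what the paper establishes and is close to its open question on chromatic numbers.) What is genuinely needed -- and what the paper cites Takahasi for -- is the free-group fact that every non-identity element lies in a \emph{unique} maximal cyclic subgroup, generated by an element that is not a proper power; equivalently, that $x^m=y^n\neq e$ in a free group forces $x$ and $y$ into a common cyclic subgroup. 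Your closing sentence (``no element outside $\langle g\rangle$ can be a power of, or have as a power, an element of it without lying in it'') merely restates this fact; it is the crux of the proof and requires the free hypothesis together with a citation or an argument.

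A secondary defect: your colouring $c_0(g^k)=\Omega(|k|)$ gives $g^k$ and $g^{-k}$ the same colour on the grounds that they are non-adjacent. Under the usual convention that $x$ and $y$ are adjacent when $x\in\langle y\rangle$ or $y\in\langle x\rangle$ (the convention the paper uses elsewhere, e.g.\ when counting the ``multiples'' of $x$ in the torsion-free abelian case), $g^{-k}=(g^k)^{-1}$ is a power of $g^k$, so these two vertices \emph{are} adjacent and the colouring is improper. This is easily repaired, and in fact the whole arithmetic digression is unnecessary: $\mathbb{Z}$ is countable, so $\mathcal{G}(\mathbb{Z})$ trivially admits a proper colouring with countably many colours, which is all the gluing argument requires.
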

\begin{proof}
{By \cite[Corollary, p.51]{takashi}, in a free group, every non-identity element lies in a unique maximal
cyclic subgroup, generated by an element which is not a proper power
(when written as a reduced word). So, the power graph of a free group consists of
many copies of the power graph of an infinite cyclic group, with the
identity in all these copies identified.~}
\end{proof}
Next, we show that every abelian group is a pcc-group. First, we need the following result. 
\begin{lem}\label{torsionpcc} Every periodic group is a pcc-group.
\end{lem}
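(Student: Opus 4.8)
The plan is to construct an explicit proper colouring of $\mathcal{G}(G)$ with countably many colours, using the one feature that distinguishes periodic groups: every element has a well-defined finite order. First I would partition the vertex set as $G=\bigsqcup_{n\ge 1}S_n$, where $S_n=\{x\in G:o(x)=n\}$ (this is a genuine partition precisely because $G$ is periodic). The colour palette will be $\{(n,i):n\ge 1,\ 1\le i\le\varphi(n)\}$, which is countable since $\sum_{n\ge1}\varphi(n)=\aleph_0$, and the plan is that the first coordinate of the colour of a vertex $x$ is always $o(x)$.

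The key observation is a description of the induced subgraph $\langle S_n\rangle$. If $x,y\in S_n$ are adjacent in $\mathcal{G}(G)$, say $x=y^k$, then $n=o(x)=o(y^k)=n/\gcd(k,n)$ forces $\gcd(k,n)=1$, whence $\langle x\rangle=\langle y\rangle$; conversely, if $\langle x\rangle=\langle y\rangle$ then $x$ and $y$ are mutually powers and hence adjacent. So adjacency inside $S_n$ is exactly the equivalence relation $\langle x\rangle=\langle y\rangle$, whose classes are the sets of generators of the cyclic subgroups of $G$ of order $n$, each of size $\varphi(n)$. Thus $\langle S_n\rangle$ is a disjoint union of cliques of size $\varphi(n)$, however large (possibly uncountable) $S_n$ may be. For each cyclic subgroup $C$ of order $n$ I would fix a bijection from its set of generators to $\{1,\dots,\varphi(n)\}$ and colour a generator $x$ of $C$ by $(n,$ its image$)$; the identity receives $(1,1)$. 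Fixing all these bijections simultaneously invokes the axiom of choice for a family of finite sets, exactly as in Lemma~\ref{chrombond}, which I would note in passing.

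It then remains to check that this colouring is proper. Suppose $x\sim y$ in $\mathcal{G}(G)$ with $x\ne y$; say $x=y^k$, so $o(x)\mid o(y)$. If $o(x)\ne o(y)$, then $x$ and $y$ lie in different parts $S_{o(x)}$ and $S_{o(y)}$, and their colours already differ in the first coordinate. If $o(x)=o(y)=n$, then by the observation above $x$ and $y$ are two distinct generators of the common subgroup $\langle x\rangle=\langle y\rangle$, so the chosen bijection assigns them different second coordinates. Hence adjacent vertices always receive distinct colours, giving $\chi(\mathcal{G}(G))\le\aleph_0$, i.e.\ $G$ is a pcc-group.

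I do not expect a real obstacle here: once one notices that the power graph restricted to each "order class" $S_n$ is a union of $\varphi(n)$-cliques, the countable colouring falls out by tracking orders. The only subtlety is the (harmless) use of choice for finite sets when labelling the $\varphi(n)$ generators within each cyclic subgroup; all the genuine content is the elementary order computation $o(y^k)=o(y)/\gcd(k,o(y))$ together with the fact that periodicity makes "order" a total function on $G$.
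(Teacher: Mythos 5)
Your proof is correct and follows essentially the same route as the paper: partition $G$ by element order, observe that adjacency within each order class forces the two elements to generate the same cyclic subgroup so that each class induces a disjoint union of $\varphi(n)$-cliques, and then colour with countably many colours. Your version is slightly more careful than the paper's in making the cross-class properness explicit (via the first coordinate $o(x)$ of the colour) and in flagging the use of choice for the finite labellings, but the underlying argument is identical.
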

\begin{proof}
{Suppose that $G$ is a periodic group. For every positive integer $n$, let $G_n$ be the set of all elements of $G$ of order $n$. 
If $g,h\in G_n$ and $g$ and $h$ are adjacent in the power graph,
then $g$ and $h$ generate the same cyclic group. Hence, the induced subgraph on $G_n$ is a disjoint union of cliques of size $\phi(n)$.  
So one can colour the induced subgraph on $G_n$
with $\phi(n)$ colours. Clearly, $G=\bigcup_{n\geq 1} G_n$ and so the chromatic number of $\mathcal{G}(G)$ is at most countable.}
\end{proof}
Now, we prove that the class of pcc-groups contains the class of abelian groups. 

\begin{thm}
Every abelian group is a pcc-group. 
\end{thm}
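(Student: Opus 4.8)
The plan is to colour $\mathcal{G}(G)$ by treating the torsion elements and the elements of infinite order separately. Write $T$ for the (sub)group of torsion elements of the abelian group $G$. If $t\in T\setminus\{e\}$ and $x$ has infinite order, then $t$ is not a power of $x$, nor is $x$ a power of $t$ (powers of an infinite‑order element have infinite order, and powers of $t$ have finite order); so the only edges of $\mathcal{G}(G)$ joining $T$ to $G\setminus T$ are those incident with the universal vertex $e$. Hence, after deleting $e$, the power graph is the disjoint union of the induced subgraphs on $T\setminus\{e\}$ and on $G\setminus T$, and it will suffice to colour each of these with countably many colours and give $e$ one further colour. The induced subgraph on $T$ is exactly $\mathcal{G}(T)$, and $T$ is periodic, so this part has at most countable chromatic number by Lemma \ref{torsionpcc}.

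It remains to bound $\chi$ of the induced subgraph on the infinite‑order elements, and here I would first reduce to the torsion‑free case via the quotient map $\pi\colon G\to\bar G:=G/T$, with $\bar G$ torsion‑free abelian. The key point is that if $x\neq y$ are infinite‑order elements adjacent in $\mathcal{G}(G)$, say $y=x^n$, then $n\neq 0,1$ and in $\bar G$ we have $\pi(y)=n\,\pi(x)$ with $\pi(x)\neq 0$; torsion‑freeness of $\bar G$ then forces $\pi(x)\neq\pi(y)$, while $\pi(x),\pi(y)$ are visibly adjacent in $\mathcal{G}(\bar G)$. Thus $\pi$ carries edges to edges and distinct adjacent vertices to distinct adjacent vertices, so pulling back any proper colouring of $\mathcal{G}(\bar G)$ along $\pi$ yields a proper colouring of the infinite‑order part of $\mathcal{G}(G)$. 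So it is enough to prove that every torsion‑free abelian group is a pcc‑group.

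For torsion‑free abelian $\bar G$, for each nonzero $x$ I would consider the ``rational closure'' $\tilde C_x=\{\,y\in\bar G : my=nx\text{ for some nonzero integers }m,n\,\}\cup\{0\}$. Using that multiplication by a nonzero integer is injective on $\bar G$, one checks that $\tilde C_x$ is a subgroup, and that it is countable (for instance $y\mapsto n/m$ is a well‑defined injective homomorphism $\tilde C_x\hookrightarrow\mathbb{Q}$). Moreover any two adjacent nonzero vertices of $\mathcal{G}(\bar G)$ lie in a common $\tilde C_x$, and two of the subgroups $\tilde C_x$ are either equal or meet only in $0$; hence the nonzero vertices of $\mathcal{G}(\bar G)$ split into the blocks $\tilde C_x\setminus\{0\}$, with no edges between distinct blocks and each block inducing the power graph of the countable group $\tilde C_x$. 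Therefore $\chi(\mathcal{G}(\bar G)\setminus\{0\})\le\aleph_0$, and restoring the universal vertex $0$ gives $\chi(\mathcal{G}(\bar G))\le\aleph_0$; combined with the previous two paragraphs this proves $\chi(\mathcal{G}(G))\le\aleph_0$.

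I expect the torsion‑free step to be the main obstacle. Unlike the situation for free groups (Theorem \ref{torsionpcc2}), an infinite‑order element of a torsion‑free abelian group need not lie in a unique maximal cyclic subgroup, so ``maximal cyclic subgroup'' must be replaced by the coarser invariant $\tilde C_x$, and one must verify both that these rank‑one subgroups are countable and that they cut the nonzero part of the power graph into independent pieces. A smaller but essential point is the passage from $G$ to $G/T$: one has to confirm that $\pi$ does not identify two adjacent infinite‑order elements, which is exactly where torsion‑freeness of $G/T$ is used.
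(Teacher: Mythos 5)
Your proof is correct, and it takes a genuinely different route from the paper's. The paper goes \emph{upward}: it embeds $G$ in a divisible abelian group, which by structure theory is $H\times K$ with $H$ periodic and $K$ a direct sum of copies of $\mathbb{Q}$; it then proves that a direct product of two pcc-groups is a pcc-group (colour by ordered pairs of colours), and handles the torsion-free factor by showing that every connected component of its power graph is countable (each vertex has countable degree, so the spheres of each radius about a vertex are countable). You instead go \emph{downward}: you split $G$ itself into the torsion subgroup $T$ (handled, as in the paper, by Lemma~\ref{torsionpcc}) and the infinite-order elements, observing that no edge joins these two parts except through $e$; you then colour the infinite-order part by pulling back a colouring of $\mathcal{G}(G/T)$ along the quotient map, having checked the essential point that $\pi$ sends adjacent infinite-order elements to distinct adjacent elements of the torsion-free quotient; and you settle the torsion-free case by partitioning the nonzero elements into the rank-one subgroups $\tilde C_x$, each of which embeds in $\mathbb{Q}$ and hence is countable, with no edges between distinct blocks. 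Your treatment of the torsion-free case is essentially a sharper version of the paper's Claim~2 (your blocks are unions of the connected components the paper counts, and the embedding into $\mathbb{Q}$ replaces the induction on distance), while the quotient-plus-pullback step replaces both the embedding into a divisible group and the product lemma. What you give up is the reusable product lemma (Claim~1 of the paper), which is of independent interest; what you gain is a self-contained argument that avoids the structure theory of divisible abelian groups entirely.
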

\begin{proof}
{Let $G$ be abelian. Then $G$ can be embedded in a divisible abelian group (\cite[Theorem 4.1.6, p.98]{robinson}). It is known that  every divisible abelian group is of the form $H\times K$, where $H$ is a periodic group and $K$ is a direct sum of many copies of $\mathbb{Q}$, see \cite[Theorem 4.1.5, p.97]{robinson}. By  Lemma \ref{torsionpcc}, $H$ is a pcc-group. Now, the next two claims prove the assertion of the theorem.  

\subparagraph*{ Claim 1. } If $M$ and $N$ are two pcc-groups, then $M\times N$ is a pcc-group. 

Proof of Claim 1: Let  $f$ and $g$ be the proper countable colouring $\mathcal{G}(M)$ and $\mathcal{G}(N)$, respectively. Then the new map 
$\phi:M\times N\longrightarrow Image(f)\times Image(g)$ defined by $\phi(x,y)=(f(x),g(y))$ is a proper countable 
colouring for $\mathcal{G}(M\times N)$. This completes the proof of Claim 1.

\subparagraph*{Claim 2. } Every torsion-free abelian group is a pcc-group. 

Proof of Claim 2: Let $A$ be a torsion-free abelian group. We show that the connected components of $\mathcal{G}(A)$ are at most countable.  First, we show that the degree of any vertex is at most countable. Now, the identity element $0$ is an isolated vertex. Suppose that $x \neq 0$. There are only countably many multiples of $x$. Also, for each natural number $n$, there is at most one element $y$  such that $ny=x$. For, if $ny=nz=x$, then $n(y-z)=0$, so $y-z=0$ since $A$ is torsion-free. So there  are at most countably many elements of which $x$ is a multiple. Hence, the neighbourhood of $x$ is countable. This implies that the set of vertices of distance $2$ from $x$ is countable as well. By induction, we conclude that the set of vertices of distance $k$ from $x$ is countable. So, the component of $\mathcal{G}(A)$ which contains $x$ is countable. Now, we can colour each connected component with countably many colours. The proof of Claim 2 is complete.  
}
\end{proof}

Other classes that could be looked at would include solvable 
groups. One could also ask whether the class of pcc-groups is
extension-closed.

\subsubsection{Perfectness of the power graph}\label{perfectness}

A graph $G$ is called  \textit{perfect} if for every finite induced subgraph $H$ of $G$, $\chi(H)=\omega(H)$. The Strong Perfect Graph Theorem states that a finite graph $G$ is perfect if and only if neither $G$ nor $\overline{G}$ (the complement of $G$) contains an induced odd cycle of length at least $5$, see \cite[Theorem 14.18]{bondy}. However, this is a deep theorem, and we do not need it to
prove our results.

Utilizing Lemma \ref{chrombond} to colour the power graph with a finite set of  colours we require the group to be bounded exponent. Here we show that for such groups the resulting power graph is always perfect and can be finitely coloured. To prove this result we facilitate the concepts of comparability graph.

Let $\leq$ be a binary relation on the elements of a set $P$. If $\leq$ is reflexive and transitive, then $(P,\leq)$ is called a \textit{pre-ordered set}. All partially ordered sets are pre-ordered. The {\it comparability graph} of a pre-ordered set $(P,\leq)$ is the simple graph $\Upsilon(P)$ with the vertex set $P$ and two distinct vertices $x$ and  $y$ are adjacent if and only if either $x\leq y$  or $y\leq x$ (or both).

\begin{thm}\label{comparabilitygraph}
Let $m$ be a positive integer and $P$ be a pre-ordered set {\rm (}not necessarily finite{\rm)} whose maximum chain size is $m$. Then the comparability graph $\Upsilon(P)$ is perfect and $$\omega(\Upsilon(P))=\chi(\Upsilon(P))=m.$$
\end{thm}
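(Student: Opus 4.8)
The plan is to establish the two assertions—perfectness of $\Upsilon(P)$ and the equality $\omega(\Upsilon(P)) = \chi(\Upsilon(P)) = m$—essentially simultaneously, by exhibiting both a clique of size $m$ and a proper colouring with $m$ colours, and then checking that the same argument works for every induced subgraph. First I would observe that a clique in $\Upsilon(P)$ is a set of pairwise comparable elements; since $\leq$ is only a pre-order, ``pairwise comparable'' need not mean linearly ordered in the strict sense (one can have $x \leq y$ and $y \leq x$ with $x \neq y$), but any such clique still has all its elements lying in a single chain once we pass to the quotient poset, or more directly, a finite set of pairwise comparable elements can be enumerated $x_1, x_2, \dots$ so that $x_i \leq x_{i+1}$ for all $i$, which is precisely a chain. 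Hence $\omega(\Upsilon(P)) \leq m$, and since $P$ contains a chain of size $m$, that chain is a clique, so $\omega(\Upsilon(P)) = m$. Note this uses that the hypothesis ``maximum chain size is $m$'' is a genuine maximum, so a chain of size exactly $m$ exists.

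For the colouring, the natural idea is a ``height function'': for $x \in P$, let $h(x)$ be the size of the longest chain with top element $x$ (equivalently, the longest chain in the down-set of $x$ ending at $x$); this is a well-defined integer in $\{1, \dots, m\}$ because all chains have size at most $m$. I would then show $h$ is a proper colouring of $\Upsilon(P)$: if $x$ and $y$ are adjacent, say $x \leq y$ with $x \neq y$, then any chain of size $h(x)$ topped by $x$ extends by appending $y$ to a chain of size $h(x)+1$ topped by $y$, so $h(y) \geq h(x) + 1 > h(x)$, whence $h(x) \neq h(y)$. The one subtlety is the pre-order case where we might have $x \leq y$ and $y \leq x$ with $x \neq y$: then $\{x,y\}$ is a clique of size $2$, but is it a ``chain of size $2$''? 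If the convention is that a chain is just a totally-comparable subset (which is how comparability-graph cliques behave), this is fine and $h$ still separates them provided we are careful—actually in that degenerate case $h(x) = h(y)$ and the colouring fails. I would handle this by noting that the theorem is really about the quotient poset $P/\!\sim$ where $x \sim y$ iff $x \leq y \leq x$: the comparability graph $\Upsilon(P)$ is a ``blow-up'' of $\Upsilon(P/\!\sim)$ replacing each vertex by a clique (an equivalence class), and blow-ups of perfect graphs by cliques are perfect with the clique and chromatic numbers each increasing by the same controlled amount—but here the cleanest route is simply to assume, as is standard and as the application in Section~\ref{perfectness} will only need, that the relevant pre-order is such that equivalence classes are singletons, or to absorb the trivial adjustment. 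The main obstacle is thus purely bookkeeping about the pre-order versus partial-order distinction; the poset case is immediate.

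Finally, to get perfectness I would note that the argument above is ``hereditary'': any induced subgraph of $\Upsilon(P)$ is itself the comparability graph of the induced sub-pre-order $(P', \leq|_{P'})$, whose maximum chain size is some $m' \leq m$. Applying the two paragraphs above to $P'$ gives $\omega(\Upsilon(P')) = \chi(\Upsilon(P')) = m'$, so in particular $\chi = \omega$ on every finite induced subgraph, which is exactly the definition of perfect. (This is really Mirsky's theorem—dual to Dilworth's—packaged for comparability graphs, and it conveniently sidesteps the Strong Perfect Graph Theorem, as the paragraph before the statement promised.) One should double-check that finiteness of the induced subgraph is not needed anywhere in the height-function argument: it is not, since $h$ takes values in the fixed finite set $\{1, \dots, m\}$ regardless of $|P|$, so in fact $\Upsilon(P)$ itself, not merely its finite induced subgraphs, is $m$-colourable.
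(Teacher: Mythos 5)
Your clique bound and the hereditary argument for perfectness are fine and match the paper's strategy (cliques are chains, so $\omega=m$; colour by a height function; induced subgraphs of comparability graphs are comparability graphs). But the one place where this theorem is more than the standard Mirsky-type result for posets is exactly the point you leave unresolved: the pre-order may have $x\le y$ and $y\le x$ with $x\ne y$, and there your height function $h(x)=$ ``size of the longest chain topped by $x$'' gives $h(x)=h(y)$ on adjacent vertices, so it is not a proper colouring. Your first fallback --- assume the equivalence classes are singletons because ``the application will only need'' that case --- is wrong about the application: in Theorem~\ref{main-theorem} the pre-order is $x\le y$ iff $x$ is a power of $y$, whose equivalence classes are the generating sets of the cyclic subgroups, of size $\phi(d)$ for elements of order $d$, hence typically not singletons. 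Your second fallback (pass to the quotient poset and blow up each vertex to a clique, then invoke preservation of perfection under clique substitution) could be made to work, but as written it is only a gesture: the replication lemma is a heavier tool than the paper wants, and ``the clique and chromatic numbers each increase by the same controlled amount'' is not an argument --- you would still need to show that the blown-up graph is $m$-colourable where $m$ is the maximum over chains of the quotient of the sums of the class sizes.

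The repair is cheap and is what the paper does: break the ties inside each equivalence class by an arbitrary linear order. Concretely, form a digraph with an arc from $x$ to $y$ whenever $x\le y$ but $y\not\le x$, and put an arbitrary directed Hamiltonian path on each equivalence class (each class is itself a chain, hence has at most $m$ elements). Every directed path visits pairwise comparable, distinct vertices, so has at most $m$ vertices; and if you now set $h(x)$ to be the number of vertices on the longest directed path ending at $x$, then any two adjacent vertices of $\Upsilon(P)$ are joined by an arc in one direction or the other, so they get different values of $h$. This yields the proper $m$-colouring for the full pre-order case, and the rest of your argument goes through unchanged.
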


\begin{proof}
{The result is well known for comparability graphs of partial orders; our
proof is a slight extension of this. Since the class of comparability graphs is closed under taking induced subgraphs, it
is enough to prove that the comparability graph of $P$ has equal clique number and
chromatic number. Clearly,  a clique in a $\Upsilon(P)$ is a chain in $P$, while
a colouring is a partition into antichains.

First we show that $\omega(\Upsilon(P))=m$. Let $C$ be a clique in $\Upsilon(P)$. 
Then $C$ is a chain in $P$, and so $|C|\leq m$. Thus $\omega(\Upsilon(P))=m$.

Now, we show that $\chi(\Upsilon(P))\leq m$. We form a directed graph by
putting an arc from $x$ to $y$ whenever $x\le y$ but $y\not\le x$; and, if $C$
is an equivalence class of the relation $\equiv$ defined by $x\equiv y$ if
$x\le y$ and $y\le x$, then take an arbitrary directed path on the elements
of $C$. Clearly,  the longest directed path contains $m$ vertices. Let $P_i$ be
the set of elements $x$ for which the longest directed path ending at $x$
contains $i$ vertices. It is easy to see that $P_i$ is an independent set; these
sets partition $P$ into $m$ classes.}
\end{proof}
Now, we show that the power graph of a group is the comparability graph of a pre-ordered set. First, we define some notations. 
Let $n$ be a positive integer and $\mathcal{D}(n)$ be the set of all divisors of $n$ in $\mathbb{N}$. Define a relation $\preceq$ on $\mathcal{D}(n)$ by $r\preceq s$ if and only if $r\mid s$. Clearly, $(\mathcal{D}(n),\preceq)$ is a partially ordered set. Denote the set of all chains of $(\mathcal{D}(n),\preceq)$ by ${\mathcal{C}}(n)$. Using this convention we are able to determine the clique/chromatic number of the power graph of a group of bounded exponent (see Lemma \ref{chrombond}).  
\begin{thm}\label{main-theorem}
Let $G$ be a group of exponent $n$. Then $\mathcal{G}(G)$ is a perfect graph and $$\chi(\mathcal{G}(G))=\omega(\mathcal{G}(G))=\max\Bigg\{\sum\limits_{\substack{d\in C\\ G_d\neq \emptyset}}\phi(d)\,:\, C\in {\mathcal{C}}(n)\Bigg\}\leq n, $$ where $G_d$ is the set of elements of $G$ of order $d$, for some  divisor $d$ of $n$.
\end{thm}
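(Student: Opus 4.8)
The plan is to realize $\mathcal{G}(G)$ as the comparability graph of a suitable pre-ordered set and then invoke Theorem~\ref{comparabilitygraph}. First I would define a pre-order on $G$ itself by declaring $x \preceq y$ whenever $x \in \langle y \rangle$, i.e.\ whenever $x$ is a power of $y$. Reflexivity is clear, and transitivity follows because $x \in \langle y\rangle$ and $y\in\langle z\rangle$ imply $\langle x\rangle \subseteq \langle y \rangle \subseteq \langle z\rangle$. By construction, two distinct elements $x,y$ are adjacent in the comparability graph $\Upsilon(G,\preceq)$ exactly when $x\preceq y$ or $y\preceq x$, which is precisely the adjacency condition of the power graph; hence $\mathcal{G}(G) = \Upsilon(G,\preceq)$ as graphs. (The identity $e$ is $\preceq$-below everything, matching the fact that $e$ is adjacent to all of $G$ in $\mathcal{G}(G)$; isolated-ness issues do not arise since $G$ has finite exponent, so every nonidentity element has finite order and is a proper power of infinitely, or at least of generators of, its cyclic group — but this is irrelevant to the comparability-graph argument.)

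Next I would compute the maximum chain size $m$ of $(G,\preceq)$. A chain is a set of elements pairwise comparable under $\preceq$; equivalently, a totally ordered-by-inclusion family of cyclic subgroups together with a choice of which generators to include. The key observation is that the cyclic subgroups of $G$ appearing in a chain, ordered by inclusion, correspond (via $\langle y\rangle \mapsto o(y)$) to a chain in the divisor lattice $(\mathcal{D}(n),\preceq)$, since every element of $G$ has order dividing $n = \exp(G)$. Given such a chain $C \in \mathcal{C}(n)$ of divisors, the largest $\preceq$-chain of group elements "supported" on it is obtained by taking, for each $d \in C$ with $G_d \neq \emptyset$, a whole cyclic group $C_d$ of order $d$ sitting inside the next larger one: the elements of order exactly $d$ inside a fixed cyclic group of order $d$ number $\phi(d)$, and any two such elements generate the same subgroup hence are $\preceq$-comparable. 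So a maximal chain built along $C$ has size $\sum_{d\in C,\,G_d\neq\emptyset}\phi(d)$, and maximizing over $C \in \mathcal{C}(n)$ gives $m = \max\{\sum_{d\in C,\,G_d\neq\emptyset}\phi(d) : C\in\mathcal{C}(n)\}$. That this is $\leq n = \sum_{d\mid n}\phi(d)$ is immediate since the summands for different $d$ in a single chain are disjoint parts of the sum $\sum_{d\mid n}\phi(d)$.

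Finally, I would need to verify that this combinatorially-defined maximum is actually attained by a genuine chain in $G$ — i.e.\ that one can simultaneously choose, for a fixed optimal divisor-chain $C$, nested cyclic subgroups of $G$ realizing all the orders in $C$. This is the step I expect to be the main obstacle, because a priori the groups $C_d$ for different $d\in C$ need not be nested in $G$. The fix: if $G_{d}\neq\emptyset$ for $d\in C$ and $d' \in C$ with $d \mid d'$ and $G_{d'}\neq\emptyset$, pick $y$ of order $d'$; then $\langle y\rangle$ is cyclic of order $d'$ and contains a (unique) subgroup of order $d$, whose $\phi(d)$ generators all have order $d$. Iterating down the chain $C$ from its top element of realizable order, all the needed subgroups live inside one cyclic group whose order is the largest element of $C$ with $G_d\neq\emptyset$ — wait, that only works if the top of the chain dominates all lower entries, which it does since $C$ is a chain. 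So fix the largest $d^\ast \in C$ with $G_{d^\ast}\neq\emptyset$, pick $x$ of order $d^\ast$, and inside the cyclic group $\langle x\rangle$ every divisor $d$ of $d^\ast$ has a unique subgroup of order $d$ with exactly $\phi(d)$ elements of order $d$; collecting, for every $d\in C$ with $G_d\neq\emptyset$ (necessarily $d\mid d^\ast$), those $\phi(d)$ elements yields a $\preceq$-chain of the required size. Hence $\omega(\mathcal{G}(G)) \geq m$, and combined with Theorem~\ref{comparabilitygraph} (which gives $\omega(\Upsilon(G,\preceq)) = \chi(\Upsilon(G,\preceq)) = m$ and perfectness), the proof is complete. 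One should double-check the edge case $n=1$ (trivial group) and that $G_{d^\ast}\neq\emptyset$ for at least one $d^\ast$, which holds since $G$ is nonempty.
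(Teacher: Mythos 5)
Your proposal is correct and follows essentially the same route as the paper: both identify $\mathcal{G}(G)$ with the comparability graph of the pre-order ``$x$ is a power of $y$'' and invoke Theorem~\ref{comparabilitygraph}, and both compute the extremal clique/chain size by noting that comparable elements of the same order $d$ generate the same cyclic subgroup (giving blocks of size $\phi(d)$ along a divisor chain), with realizability obtained from the power-of-a-single-element construction. The only organizational difference is that the paper computes $\omega$ first via its Facts 1 and 2 and then brings in the comparability structure for perfectness, whereas you phrase everything as a maximum-chain computation from the start; the content is the same.
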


\begin{proof}
{First we consider two following facts:\\

{\bf{Fact 1}}. Suppose that $d$ is a divisor of $n$ and $G_d\neq \emptyset$. If $g,h\in G_d$ and $g$ and $h$ are adjacent in the power graph,
then $g$ and $h$ generate the same cyclic group. So $\langle G_d\rangle$ is a disjoint union of cliques of size $\phi(d)$. Therefore; if $x$ is an element of a clique
$H$ of $\langle G_d\rangle$ adjacent to an element $y$ of a clique $K$ of $\langle G_{d'}\rangle$, then every element of $H$ is adjacent to every element of $K$ and moreover, $d\mid d'$ or $d'\mid d$.\\

{\bf{Fact 2}}. Note that  if $z$ is an element of order $d$, then for each divisor $d'$ of $d$, $z^{\frac{d}{d'}}$ is of order $d'$. So for each clique $T$ of $\langle G_d\rangle$, every element of $T$ is adjacent to every element of a clique $S$ of $\langle G_{d'}\rangle$.\\

Since $\{G_d\,:\,  G\ {\rm has\ an\ element\ of\ order\ }d\}$ forms a partition for $G$, Fact $1$ implies that every maximal clique of $\mathcal{G}(G)$ has the form $Cl_1\cup\cdots \cup Cl_m$, where $Cl_i$ is a clique of $\langle G_{d_i}\rangle$ of size $\phi(d_i)$ and $\{d_1,\ldots,d_m\}$ is a chain of length $m$ belonging to ${\mathcal{C}}(n)$. Moreover; by Fact $2$, we deduce that for every chain $\{d_1,\ldots,d_m\}$ in ${\mathcal{C}}(n)$, there exists a clique for $\mathcal{G}(G)$ of this form. Now, by $|Cl_i|=\phi(d_i)$, we conclude
that $$\omega(\mathcal{G}(G))=\max\Bigg\{\sum\limits_{\substack{d\in C\\ G_d\neq \emptyset}}\phi(d)\,:\, C\in {\mathcal{C}}(n)\Bigg\}\leq \sum_{d\mid n}\phi(d)=n.$$

Define a pre-ordering $\leq$ on $G$ by $x\leq y$ if and only if $x$ is a power of $y$. Clearly, the power graph of $G$ is the comparability graph of $\leq$ and so by Theorem \ref{comparabilitygraph}, the power graph of $G$ is perfect. Thus $\chi(\mathcal{G}(G))=\omega(\mathcal{G}(G))$ and the proof is complete.}
\end{proof}

The next two corollaries are direct consequences of Lemma \ref{chrombond} and Theorem \ref{main-theorem}. 

\begin{cor}
For every group $G$, the following statements are equivalent:
\begin{enumerate}
\item[\rm (i)] $\chi(\mathcal{G}(G))<\infty$;
\item[\rm(ii)] $\omega(\mathcal{G}(G))<\infty$;
\item[\rm(iii)] $G$ is bounded exponent.
\end{enumerate}
Moreover, the chromatic number of $\mathcal{G}(G)$ does not
exceed the exponent of $G$.
\end{cor}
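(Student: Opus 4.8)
The plan is to derive everything directly from Lemma~\ref{chrombond} and Theorem~\ref{main-theorem}, which together already contain all the substance. The only work left is to assemble the equivalences and the inequality correctly.

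First I would prove the cycle of implications. The implication (iii)$\Rightarrow$(i): if $G$ has bounded exponent, say exponent $n$, then Theorem~\ref{main-theorem} applies and gives $\chi(\mathcal{G}(G))=\omega(\mathcal{G}(G))\leq n<\infty$, so in particular $\chi(\mathcal{G}(G))<\infty$. The implication (i)$\Rightarrow$(ii): this is the trivial direction, since $\omega(H)\leq\chi(H)$ for every finite induced subgraph $H$, hence $\omega(\mathcal{G}(G))\leq\chi(\mathcal{G}(G))$; more directly, a clique of size $k$ requires $k$ colours, so $\omega(\mathcal{G}(G))\leq\chi(\mathcal{G}(G))$, and finiteness of the latter forces finiteness of the former. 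Finally (ii)$\Rightarrow$(iii) is precisely the statement of Lemma~\ref{chrombond}. This closes the loop, so all three statements are equivalent.

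For the ``moreover'' clause, I would argue as follows. If $G$ does not have bounded exponent, then by Lemma~\ref{chrombond} $\omega(\mathcal{G}(G))=\infty$, hence $\chi(\mathcal{G}(G))=\infty$ as well, and there is nothing to prove (the exponent being undefined/infinite). If $G$ has bounded exponent $n$, then Theorem~\ref{main-theorem} gives $\chi(\mathcal{G}(G))=\omega(\mathcal{G}(G))\leq n$, which is exactly the bound claimed, since $n$ is the exponent of $G$.

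I do not expect any genuine obstacle here: this corollary is purely bookkeeping on top of the two previously established results, and the only points requiring any care are (a) making sure the trivial direction (i)$\Rightarrow$(ii) is stated via $\omega\leq\chi$ rather than the reverse inequality, and (b) handling the unbounded-exponent case of the ``moreover'' clause separately so that the inequality is vacuously valid there.
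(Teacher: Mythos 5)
Your proposal is correct and follows exactly the route the paper intends: the corollary is stated there as a direct consequence of Lemma~\ref{chrombond} and Theorem~\ref{main-theorem}, and your cycle (iii)$\Rightarrow$(i) via Theorem~\ref{main-theorem}, (i)$\Rightarrow$(ii) via $\omega\leq\chi$, and (ii)$\Rightarrow$(iii) via Lemma~\ref{chrombond}, together with the bound $\chi=\omega\leq n$, is precisely that argument spelled out.
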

\begin{cor}
Let $G$ be an abelian group of exponent $n$. Then $$\chi(\mathcal{G}(G))=\omega(\mathcal{G}(G))=\max\left\{\sum_{d\in C}\phi(d)\,:\, C\in {\mathcal{C}}(n)\right\}.$$
\end{cor}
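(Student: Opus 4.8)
The plan is to reduce this corollary to Theorem~\ref{main-theorem} by observing that when $G$ is abelian of exponent $n$, the condition $G_d\neq\emptyset$ is automatically satisfied for \emph{every} divisor $d$ of $n$, so the restricted maximum over chains with $G_d\neq\emptyset$ coincides with the unrestricted maximum over all chains in $\mathcal{C}(n)$. Thus the only real content is to verify the claim that an abelian group of exponent $n$ has an element of every order dividing $n$.

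First I would recall that $G$ has exponent $n$ means $n$ is the least common multiple of the orders of the elements of $G$; so for each prime power $p^a$ exactly dividing $n$ there is an element $g_p$ whose order is divisible by $p^a$, and replacing $g_p$ by a suitable power we may assume $o(g_p)=p^a$. Now, since $G$ is abelian, the product $\prod_p g_p$ over the distinct prime divisors $p$ of $n$ is an element whose order is $\prod_p p^a = n$ (the orders of the $g_p$ are pairwise coprime, so the order of the product is the product of the orders). Hence $G$ contains a cyclic subgroup $\langle h\rangle\cong C_n$, and consequently, for every divisor $d$ of $n$, the element $h^{n/d}$ has order exactly $d$, i.e.\ $G_d\neq\emptyset$.

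With this established, the maximum in Theorem~\ref{main-theorem} is taken over all $C\in\mathcal{C}(n)$ with no restriction, giving
$$\omega(\mathcal{G}(G))=\max\left\{\sum_{d\in C}\phi(d)\,:\, C\in\mathcal{C}(n)\right\}.$$
Since $G$ has bounded (indeed finite) exponent by hypothesis, Lemma~\ref{chrombond} applies and Theorem~\ref{main-theorem} also gives $\chi(\mathcal{G}(G))=\omega(\mathcal{G}(G))$, which is the desired formula.

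I do not anticipate any serious obstacle here: the argument is essentially a one-line reduction plus the standard fact that an abelian group of exponent $n$ has an element of order $n$. The only point requiring a modicum of care is the coprimality/product-of-orders computation for $\prod_p g_p$, and the observation that being abelian is exactly what makes this work (the analogous statement fails for nonabelian groups, which is why Theorem~\ref{main-theorem} must keep the $G_d\neq\emptyset$ restriction in general).
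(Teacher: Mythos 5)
Your proposal is correct and matches the paper's intent: the paper states this corollary without proof as a direct consequence of Lemma~\ref{chrombond} and Theorem~\ref{main-theorem}, and the only content to supply is exactly what you supply, namely that an abelian group of exponent $n$ contains an element of order $n$ (hence $G_d\neq\emptyset$ for every $d\mid n$), so the restriction $G_d\neq\emptyset$ in the maximum becomes vacuous. Your coprime-product argument for producing an element of order $n$ is the standard one and is carried out correctly.
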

\begin{cor}\label{characteromega}
Let $H$ be a subgroup of $G$ and $[G:H]<\infty$. Then $\omega({\mathcal{G}}(H))<\infty$ if and only if $\omega({\mathcal{G}}(G))<\infty$.
\end{cor}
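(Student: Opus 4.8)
The plan is to reduce everything to bounded exponent. By Lemma~\ref{chrombond} together with Theorem~\ref{main-theorem} (equivalently, the first of the two preceding corollaries), we already know that for any group $K$ one has $\omega(\mathcal{G}(K))<\infty$ if and only if $K$ is of bounded exponent. Applying this to $K=G$ and to $K=H$, the statement to be proved becomes the purely group-theoretic assertion: if $[G:H]<\infty$, then $G$ is of bounded exponent if and only if $H$ is.

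One direction is immediate. If $G$ has exponent $n$, then $h^n=e$ for every $h\in H$, so $H$ is of bounded exponent (with exponent dividing $n$). For the converse, suppose $H$ has exponent $m$ and set $k=[G:H]$. Given an arbitrary $g\in G$, consider the $k+1$ left cosets $g^0H,g^1H,\dots,g^kH$; since $H$ has only $k$ left cosets in $G$, the pigeonhole principle gives indices $0\le i<j\le k$ with $g^iH=g^jH$, hence $g^{\,j-i}\in H$ with $1\le j-i\le k$. Consequently $\bigl(g^{\,j-i}\bigr)^m=e$, so $o(g)$ divides $(j-i)m$, and therefore divides $N:=m\cdot\mathrm{lcm}(1,2,\dots,k)$. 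As $g\in G$ was arbitrary, $g^N=e$ for all $g\in G$, so $G$ is of bounded exponent, and the corollary follows.

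I do not expect a real obstacle here; the argument is routine. The only point requiring a little care is that $H$ need not be normal in $G$, so one must work with left (or right) cosets throughout and use the elementary fact that $g^iH=g^jH$ still forces $g^{\,j-i}\in H$; normality is never needed.
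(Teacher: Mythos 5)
Your proposal is correct and follows exactly the route the paper intends: the paper states this corollary as a direct consequence of Lemma~\ref{chrombond} and Theorem~\ref{main-theorem} (i.e.\ $\omega(\mathcal{G}(K))<\infty$ iff $K$ has bounded exponent) and leaves the transfer of bounded exponent across a finite-index subgroup unstated. Your pigeonhole argument supplies that missing group-theoretic step correctly, including the observation that normality of $H$ is not needed.
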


The following example shows that a similar assertion does not hold for the independence number.
\begin{example}
Let $G= C_2\times C_{2^\infty}$ and $H=\{0\}\times  C_{2^\infty}$. Thus $[G:H]=2$. Since ${\mathcal{G}}(H)$ is a complete graph, $\alpha(H)=1$.  Clearly, the set $\{1\}\times C_{2^\infty}$ is independent and so $\alpha(G)=\infty$.
\end{example}

\subsection{Miscellaneous properties}\label{misc}

We conclude this section with three miscellaneous properties of the power graph of a group.

\begin{thm}\label{bipartite}
If ${\mathcal {G}}(G)$ is a triangle-free graph, then $G$ is isomorphic to a direct product of $ C_2$ and ${\mathcal {G}}(G)$ is a star.
\end{thm}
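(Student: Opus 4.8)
The plan is to analyze the structure forced by triangle-freeness. First I would observe that if $\mathcal{G}(G)$ is triangle-free then $G$ is of bounded exponent: indeed, by Lemma~\ref{chrombond}'s argument, an element of order $>4$ produces a clique $\{g,g^2,g^4\}$ (or even just noting $o(g)\ge 3$ gives a triangle $\{e,g,g^2\}$ once $\langle g\rangle$ has size at least $3$), so every element has order at most $2$, except we must also rule out order exactly $3$ and higher more carefully. Concretely, for any $g$ with $o(g)=n\ge 3$, the set $\{e, g, g^2\}$ consists of three pairwise-adjacent vertices (each is a power of $g$), a triangle. Hence every non-identity element has order $2$, i.e.\ $G$ is an elementary abelian $2$-group (in particular abelian), so $G\cong \prod_{I} C_2$ for some index set $I$.

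Next I would pin down $|I|$. In an elementary abelian $2$-group, every element $x\neq e$ satisfies $x = x^{-1}$, and the only powers of $x$ are $e$ and $x$ itself; so $x$ is adjacent precisely to $e$ and to nothing else among the non-identity elements. Thus $\mathcal{G}(G)$ is exactly the star with center $e$ and leaves $G\setminus\{e\}$ — this already shows $\mathcal{G}(G)$ is a star. For it to be triangle-free there is no further constraint (a star is always triangle-free), so the real content of the statement "$G$ is isomorphic to a direct product of $C_2$" must be read as: $G$ is a (possibly infinite, possibly trivial, possibly a single $C_2$) direct product of copies of $C_2$. That is precisely what the order-$2$ conclusion gives via the classification of elementary abelian $2$-groups (or directly: pick a basis of $G$ as an $\mathbb{F}_2$-vector space). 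So the two assertions — $G$ is a direct product of copies of $C_2$, and $\mathcal{G}(G)$ is a star centered at the identity — both follow.

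The one subtlety, and the step I expect to need the most care, is the degenerate cases: if $|G|=1$ then $\mathcal{G}(G)$ is a single vertex, and if $|G|=2$ then $\mathcal{G}(G)=K_2$; both are vacuously or trivially stars, and "a direct product of $C_2$" should be interpreted to allow the empty product and the one-factor product. I would state this explicitly to avoid ambiguity. Assembling: assume $G$ is nontrivial; show $\mathcal{G}(G)$ triangle-free forces $o(x)\le 2$ for all $x$ by exhibiting the triangle $\{e,x,x^2\}$ whenever $o(x)\ge 3$; conclude $G$ is an elementary abelian $2$-group, hence $G\cong\prod_I C_2$; finally observe that in such a group each non-identity vertex is adjacent only to $e$, so $\mathcal{G}(G)$ is the star with center $e$. $\Box$
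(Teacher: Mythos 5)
Your proposal is correct and follows essentially the same route as the paper: exhibit the triangle $\{e,a,a^2\}$ whenever $o(a)\ge 3$, conclude $G$ is an elementary abelian $2$-group (the paper invokes the Pr\"ufer--Baer theorem where you pick an $\mathbb{F}_2$-basis, which amounts to the same thing), and then observe that each non-identity element is adjacent only to $e$, so the graph is a star. Your explicit handling of the degenerate cases $|G|\le 2$ is a reasonable extra precaution but does not change the argument.
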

\begin{proof}
{First we show that the order of every element of $G$ is at most 2. Let $a\in G$. 
If $o(a)\geq 3$, then $\{e,a,a^2\}$ is a triangle, a contradiction. So $G$ is an elementary abelian 2-group. Therefore, by Pr\"{u}fer-Baer Theorem, $G$ is isomorphic to a direct product of $ C_2$ and so ${\mathcal {G}}(G)$ is a star with the center $e$.}
\end{proof}

The following theorem characterizes those groups whose power graphs are connected.
\begin{thm}\label{periodicdiameter}
Let $G$ be a group. The following statements are equivalent.
\begin{enumerate}
 \item[\rm(i)] ${\mathcal {G}}(G)$ is connected;
\item[\rm(ii)] $G$ is periodic;
\item[\rm(iii)] $\gamma({\mathcal {G}}(G))=1$;
\item[\rm(iv)] $\diam ({\mathcal{G}}(G))\leq 2$.
\end{enumerate}
\end{thm}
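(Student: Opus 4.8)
The plan is to prove the cycle of implications $(ii)\Rightarrow(iii)\Rightarrow(iv)\Rightarrow(i)\Rightarrow(ii)$, in which only the last link carries any real content. For $(ii)\Rightarrow(iii)$: if $G$ is periodic, then $e=g^{o(g)}$ for every $g\in G\setminus\{e\}$, so the identity is adjacent to every other vertex; hence $\{e\}$ is a dominating set and $\gamma(\mathcal{G}(G))=1$. For $(iii)\Rightarrow(iv)$: a dominating set of size one is a single vertex $v$ adjacent to all other vertices, so any two vertices are joined by a path of length at most $2$ through $v$, giving $\diam(\mathcal{G}(G))\le 2$. And $(iv)\Rightarrow(i)$ is immediate from the convention that a disconnected graph has infinite diameter.

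It remains to prove $(i)\Rightarrow(ii)$, which I would do contrapositively: assuming $G$ is not periodic, I will show $\mathcal{G}(G)$ is disconnected. Fix $g\in G$ of infinite order. The key point is that the set $S$ of elements of infinite order is closed under adjacency in $\mathcal{G}(G)$: if $x\in S$ and $x$ is adjacent to $y$, then $y=x^m$ or $x=y^m$ for some positive integer $m$; in the first case $y$ is a nontrivial power of an element of infinite order and hence has infinite order, while in the second case $\langle x\rangle\subseteq\langle y\rangle$ forces $\langle y\rangle$ to be infinite, so again $y\in S$. Therefore $S$ is a union of connected components of $\mathcal{G}(G)$. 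Since $g\in S$ while $e\notin S$, the vertices $g$ and $e$ lie in distinct components, so $\mathcal{G}(G)$ is disconnected, as required.

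The only genuine obstacle is the observation that infinite order is preserved along edges of the power graph; every other implication is formal bookkeeping. In writing this up one should note the boundary case of the trivial group (where all four statements hold trivially) and keep in mind that, unlike in the periodic case, the identity need not be isolated when $G$ is non-periodic — it is merely separated from the infinite-order elements, which is all the argument needs.
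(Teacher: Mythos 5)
Your proposal is correct and follows essentially the same route as the paper: the implications $(ii)\Rightarrow(iii)\Rightarrow(iv)\Rightarrow(i)$ are handled identically, and your contrapositive argument for $(i)\Rightarrow(ii)$ (the infinite-order elements form a set closed under adjacency, hence a union of components not containing $e$) is just a repackaging of the paper's argument, which walks along a path from $x$ to $e$ propagating finiteness of order edge by edge. Both hinge on the same observation that finiteness of order is preserved along edges of $\mathcal{G}(G)$, so no further comment is needed.
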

\begin{proof}
{(i)$\Longrightarrow$(ii) Let $x$ ($x\neq e$)  be a vertex of ${\mathcal {G}}(G)$. We show that $x$ is of finite order in $G$. Since ${\mathcal {G}}(G)$ is connected, there is a path from $x$ to $e$. Let $y$ be the adjacent vertex to $e$ in this path. So the order of $y$ is finite. Now, suppose that $t$ is the adjacent vertex to $y$ in this path. Then the order of $t$ is finite, too. By repeating this procedure, we deduce that the order of $x$ is finite. So $G$ is periodic.\\
(ii)$\Longrightarrow$(iii) Since every element in $G$ has a finite order, $\{e\}$ is a dominating set.\\
The parts (iii)$\Longrightarrow$(iv) and (iv)$\Longrightarrow$(i) are clear.}
\end{proof}

\begin{thm}
If $\deg(g)<\infty$, for every $g\in G$, then $G$ is a finite group.
\end{thm}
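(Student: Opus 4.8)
The plan is to prove the statement in two steps: first that $G$ must be periodic, and then that a periodic group with all degrees finite is in fact finite.

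For the first step I would argue by contradiction. Suppose some $g\in G$ has infinite order. Then the elements $g,g^2,g^3,\dots$ are pairwise distinct, and for every $k\ge 2$ the element $g^k$ is a power of $g$, hence adjacent to $g$ in $\mathcal{G}(G)$. This produces infinitely many neighbours of $g$, contradicting $\deg(g)<\infty$. Hence every element of $G$ has finite order, i.e.\ $G$ is periodic. (One could also cite Theorem~\ref{periodicdiameter} here, but this direct argument is shorter and self-contained.)

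For the second step, note that since $G$ is periodic, every non-identity element $g\in G$ satisfies $g^{o(g)}=e$, so $e$ is a power of $g$ and therefore $e$ is adjacent to $g$ in $\mathcal{G}(G)$. Consequently $\deg(e)=|G|-1$, and the hypothesis $\deg(e)<\infty$ forces $|G|<\infty$. Alternatively, if one wishes to avoid the explicit identity $\deg(e)=|G|-1$, one can invoke Theorem~\ref{periodicdiameter}: periodicity gives $\diam(\mathcal{G}(G))\le 2$, so $V(\mathcal{G}(G))=\{e\}\cup N(e)\cup\bigcup_{v\in N(e)}N(v)$ is a finite union of finite sets.

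I do not expect a genuine obstacle in this proof; it is short. The only point demanding mild care is the convention for the power graph (whether the identity, or negative powers, are counted as ``powers''), but since positive powers of an infinite-order element already furnish the infinitely many neighbours used in Step~1, and since $g^{o(g)}=e$ with $o(g)\ge 1$ furnishes the edge from $e$ used in Step~2, the argument is insensitive to that convention.
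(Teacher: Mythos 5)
Your proof is correct and follows essentially the same route as the paper's: first deduce periodicity from the finiteness of each degree (an element of infinite order would have its distinct positive powers as infinitely many neighbours), then observe that in a periodic group the identity is adjacent to every other vertex, so $\deg(e)<\infty$ forces $|G|<\infty$. The extra detail you supply in Step~1 merely makes explicit what the paper leaves implicit.
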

\begin{proof}
{Let $g\in G$. Since $\deg(g)<\infty$, $g$ has a finite order in $G$. Thus $G$ is a periodic group and so $e$ is adjacent to every other vertices of $\mathcal{G}(G)$. Since $\deg(e)<\infty$, we deduce that $G$ is finite.}
\end{proof}

\begin{remark}
Let $G\cong \prod_{i\geq 1} C_2$. Then $\mathcal{G}(G)$ is an infinite star with the center $0$. This shows that in the previous theorem the condition $\deg(g)<\infty$, for every $g\in G$ is necessary.
\end{remark}

\section{Power graph and commuting graph}\label{power-commuting}

Let $G$ be a group. If the vertices $x$ and $y$ are joined in the power graph
of $G$, then they are joined in the commuting graph; so the power graph is a
spanning subgraph of the commuting graph.

\begin{ques}\label{power=commuting}
For which groups is it the case that the power graph is
equal to the commuting graph?
\end{ques}

The identity is joined to all others in the commuting graph; so if the two
graphs are equal, then $G$ is a periodic group. 

\begin{thm}\label{powergraph-equals-commuting}
Let $G$ be a finite group with power graph equal to commuting graph. Then
one of the following holds:
\begin{itemize}
\item $G$ is a cyclic $p$-group;
\item $G$ is a semidirect product of $C_{p^a}$ by $C_{q^b}$, where $p$ and
$q$ are primes with $a,b>0$, $q^b\mid p-1$ and $C_{q^b}$ acts faithfully on
$C_{p^a}$;
\item $G$ is a generalized quaternion group.
\end{itemize}
\end{thm}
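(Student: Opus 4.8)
The plan is to combine the hypothesis with classical structural facts about finite groups in which the power graph and commuting graph agree, working through the implications the equality forces on centralizers and Sylow subgroups. The first observation is that for the two graphs to coincide, whenever two elements $x,y$ commute, one must be a power of the other; equivalently, every abelian subgroup of $G$ must be \emph{locally cyclic}, hence cyclic (since $G$ is finite). In particular every Sylow subgroup of $G$ is either cyclic or generalized quaternion, by the standard classification of $p$-groups all of whose abelian subgroups are cyclic. This is the structural backbone, and I would establish it first.

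Next I would split into cases according to how many prime divisors $|G|$ has. If $G$ is a $p$-group, then by the previous step $G$ is cyclic or generalized quaternion; but if $G$ is generalized quaternion it does have commuting non-powers? — no: in a generalized quaternion group the unique involution is central and is a power of everything, and any two commuting elements lie in a cyclic subgroup, so generalized quaternion groups genuinely satisfy the hypothesis, giving the third bullet, while cyclic $p$-groups give the first bullet. If $|G|$ has at least two prime divisors, I would argue that $G$ cannot be nilpotent: a nilpotent group is the direct product of its Sylow subgroups, and a direct product of two nontrivial groups always contains a non-cyclic abelian subgroup (a product of two elements of coprime order from different factors generates a noncyclic — wait, cyclic — but take $C_p\times C_q$ with a $C_p$ inside one Sylow and … actually $C_p \times C_q \cong C_{pq}$; the right statement is that if two distinct Sylow subgroups are nontrivial, pick an element of order $p$ and the whole of a cyclic Sylow $q$-subgroup if $b>1$, or rather: a nilpotent group with two prime divisors has an element $x$ of order $p$ and an element $y$ of order $q$ commuting with $o(\langle x,y\rangle) = pq$ but then $\langle x \rangle, \langle y\rangle$ are commuting subgroups with $\langle x\rangle \not\leq \langle y\rangle$ and vice versa unless the whole thing is cyclic — so the obstruction only arises when some Sylow subgroup has rank issues; I would instead directly use that in $C_{p^a}\times C_{q^b}$ the subgroup $C_p \times C_q$ is cyclic, so the real point is that a non-cyclic abelian subgroup appears iff some Sylow is non-cyclic, already handled). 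So the genuine content for the multi-prime case is: $G$ is not nilpotent, it has cyclic or quaternion Sylow subgroups, and the centralizer of every nonidentity element is cyclic (since $C_G(x)$ is abelian — if $y,z \in C_G(x)$ then, hmm, $C_G(x)$ need not be abelian). I would instead use: the centralizer of every element of prime order is cyclic, because an abelian subgroup containing such an element together with anything commuting with it must be cyclic — more carefully, $\langle x \rangle Z$ for $Z$ in the center of $C_G(x)$ is abelian hence cyclic.

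From cyclic-or-quaternion Sylows plus non-nilpotence, I would invoke the Burnside normal $p$-complement theorem and known classifications (e.g. groups with all Sylow subgroups cyclic are metacyclic $Z$-groups of the form $C_m \rtimes C_n$ with $\gcd(m,n)=1$; if one Sylow is quaternion one gets a small list) to reduce to a group $C_{p^a} \rtimes C_{q^b}$, and then use the power-graph$=$commuting-graph condition to force: (i) exactly two primes, since a $Z$-group $C_m \rtimes C_n$ with $m$ having two prime factors contains a non-cyclic abelian subgroup inside the normal Sylow tower; (ii) the action of $C_{q^b}$ on $C_{p^a}$ is faithful — if the kernel $K$ were nontrivial, then $K$ centralizes an element of order $p$, producing a commuting pair with neither a power of the other; and (iii) $q^b \mid p-1$, which is exactly the condition for $C_{q^b}$ to embed in $\mathrm{Aut}(C_{p^a}) \cong C_{p^{a-1}(p-1)}$ with image acting without fixed points. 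Finally I would check that each group on the list does satisfy the hypothesis, so the characterization is exact; verifying the semidirect product case — that two commuting elements there really are powers of one another — is a short centralizer computation using that the $q$-part acts fixed-point-freely.

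\textbf{Main obstacle.} The hard part will be the reduction in the non-nilpotent, multi-prime case: getting from ``all Sylow subgroups cyclic or quaternion'' to the precise three-item list requires either citing the classification of $Z$-groups (Hölder–Burnside–Zassenhaus) and the corresponding classification when a quaternion Sylow is present, or reproving enough of it by hand, and then squeezing out the faithfulness and the $q^b \mid p-1$ divisibility from the graph condition. Managing the quaternion case so that it does not leak extra groups — e.g. $SL(2,3)$ or binary dihedral groups — is the delicate point; one must show those contain a non-cyclic abelian subgroup (for $SL(2,3)$, the Sylow $2$-subgroup $Q_8$ is fine, but an element of order $3$ together with the central involution generates $C_6$, fine, yet $SL(2,3)$ has an element of order… it has no element of order $6$? it does: $-I$ times an order-$3$ element — so $\langle \text{that}\rangle = C_6$ is cyclic; the failure instead comes from two order-$3$ subgroups or an order-$4$ element commuting with an order-$3$ element, and indeed in $SL(2,3)$ a $4$-element and a $3$-element never commute, so one must find the actual obstruction, likely that two distinct Sylow $3$-subgroups each commute with $-I$, so $-I$ has non-cyclic centralizer $Q_8$ — wait $Q_8$ is nonabelian — so $C_G(-I) = SL(2,3)$ itself, which is nonabelian, and nonabelian centralizers are allowed; the true obstruction in $SL(2,3)$ is subtler and I would need to locate a genuine commuting non-power pair, perhaps there is none and $SL(2,3)$ is wrongly excluded — this is exactly where the argument needs care and where I expect to spend the most effort).
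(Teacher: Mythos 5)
There is a genuine gap, and it is exactly where you suspected trouble. Your opening translation of the hypothesis --- ``commuting implies one is a power of the other, equivalently every abelian subgroup is cyclic'' --- is too weak. The condition also rules out $C_p\times C_q$ for \emph{distinct} primes $p,q$: in the cyclic group $C_{pq}=\langle g\rangle$ the elements $g^q$ (of order $p$) and $g^p$ (of order $q$) commute, yet neither is a power of the other. So the correct consequence is that $G$ has no subgroup isomorphic to $C_p\times C_q$ for \emph{any} primes $p,q$, equal or not; equivalently, every abelian subgroup is cyclic \emph{of prime power order}, and elements of coprime prime orders never commute. Losing the $p\neq q$ half of this is what derails you in three places. (1) Your exclusion of $Z$-groups $C_m\rtimes C_n$ with $m$ composite via ``a non-cyclic abelian subgroup inside the Sylow tower'' fails: $C_{15}$ is cyclic. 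The real reason the kernel is a $p$-group is that a cyclic group of order $pq$ already violates the hypothesis. (2) Your $SL(2,3)$ worry dissolves: the subgroup $\langle -I, x\rangle\cong C_6$ with $x$ of order $3$, which you dismissed as ``fine'', is precisely a forbidden $C_2\times C_3$; $SL(2,3)$ is correctly excluded. (3) The quaternion-Sylow case with $|G|$ divisible by an odd prime is not handled at all by ``abelian subgroups are cyclic''; the paper disposes of it by Glauberman's $Z^*$-theorem: $G/O(G)$ has a central involution, which must act fixed-point-freely on $O(G)$ (else it would commute with an odd-order element, giving a forbidden $C_2\times C_r$), forcing $O(G)$ abelian, hence cyclic of prime power order, on which a generalized quaternion group cannot act faithfully; so $O(G)=1$, the involution is central in $G$, and $G$ is a $2$-group.

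For comparison, the paper's route is: no $C_p\times C_q$ for any primes $p,q$; Burnside gives cyclic or generalized quaternion Sylow subgroups; in the all-cyclic case $G$ is metacyclic with cyclic normal subgroup necessarily a $p$-group, whose centralizer is a Sylow $p$-subgroup, yielding $C_{p^a}\rtimes C_{q^b}$ with $q^b\mid p-1$ and faithful action; the quaternion case collapses to a $2$-group as above. Your skeleton (Burnside, normal $p$-complement/metacyclic structure, squeezing out faithfulness and $q^b\mid p-1$) is compatible with this, but it cannot be completed until the hypothesis is translated in its full strength.
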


\begin{proof}
{Let $G$ have power graph equal to commuting graph; that is, if two elements
commute, then one is a power of the other. Then $G$ contains no subgroup
isomorphic to $C_p\times C_q$, where $p$ and $q$ are primes, since this group
fails the condition.

A theorem of Burnside~\cite[Theorem 12.5.2]{hall} says that a $p$-group containing
no $C_p\times C_p$ subgroup is cyclic or generalized quaternion. So all Sylow
subgroups of $G$ are of one of these two types.

Suppose that all Sylow subgroups are cyclic. Then $G$ is metacyclic
\cite[Theorem 9.4.3]{hall}.
The cyclic normal subgroup of $G$ has order divisible by one prime only,
say $p$. Its centraliser in $G$ is a Sylow $p$-subgroup $P$ of $G$, since it
contains no elements of order coprime to $p$. Hence $G$ is a semidirect
product of $P$ and a cyclic group $Q$ of order coprime to $p$, necessarily a
cyclic $q$-group for some prime $q$. If $|Q|=q^b$, then we have $q^b\mid p-1$.

So we may suppose that $G$ has generalized
quaternion Sylow $2$-subgroups. By Glauberman's Z*-Theorem~\cite{glauberman},
$G/O(G)$ has a central involution, where $O(G)$ is the maximal normal subgroup of $G$ of odd order. 
This involution must act fixed-point-freely on $O(G)$, so $O(G)$ is abelian,
and hence cyclic of prime power order. But a generalized quaternion group
cannot act faithfully on such a group. So $O(G)=1$. Then the involution in
$G$ is central, so $G$ is a $2$-group, necessarily a
generalized quaternion group.}
\end{proof}

\begin{remark}
In the infinite case, there are other examples, such as \emph{Tarski monsters},
which are infinite groups whose non-trivial proper subgroups are all cyclic
of prime order $p$. Probably no classification is possible.
\end{remark}

In the next theorem, we extend Theorem \ref{powergraph-equals-commuting} to solvable groups. 

\begin{thm}
Let $G$ be a solvable group with power graph equal to commuting graph. Then
one of the following holds:
\begin{itemize}
\item $G$ is a cyclic $p$-group;
\item $G$ is a semidirect product of $C_{p^a}$ by $C_{q^b}$, where $p$ and
$q$ are primes with $a,b>0$, $q^b\mid p-1$ and $C_{q^b}$ acts faithfully on
$C_{p^a}$;
\item $G$ is a generalized quaternion group; 
\item $G$ is the $p$-quasicyclic group $C_{p^\infty}$; 
\item $G$ is a semidirect product of  $p$-quasicyclic group $C_{p^\infty}$ and a finite cyclic group.
\end{itemize}
\end{thm}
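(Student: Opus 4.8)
The strategy is to reduce to a locally finite group, apply Theorem~\ref{powergraph-equals-commuting} to all its finite subgroups, and then run directed-union arguments, splitting according to the number of primes involved. First I would record that ``power graph $=$ commuting graph'' means exactly that any two commuting elements are comparable under ``is a power of''; passing to the subgroup generated by such a pair shows every abelian subgroup has its cyclic subgroups totally ordered, so $G$ contains no $C_r\times C_s$ for any primes $r,s$ (equal or not). In particular $G$ has no $C_r\times C_r$, and, since the two prime-order subgroups of $C_{rs}$ are incomparable, every element of $G$ has prime-power order. As $G$ is periodic and solvable, it is locally finite (induct on the derived length: each factor is a periodic abelian, hence locally finite, group, and the class of locally finite groups is closed under extensions). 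Hence every finitely generated subgroup of $G$ is finite, inherits the hypothesis, and, being finite (so solvable), is one of the three groups of Theorem~\ref{powergraph-equals-commuting}; thus $\pi(G)$ (the set of primes dividing the order of some element of $G$) has size at most $2$. If $G$ is finite we are done, so I may assume $G$ infinite.

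In the case $\pi(G)=\{p\}$, $G$ is an infinite locally finite $p$-group with a unique subgroup of order $p$. If $p$ is odd, or if $p=2$ and $G$ has no subgroup $Q_8$, then by Burnside's theorem \cite[Theorem~12.5.2]{hall} every finite subgroup is cyclic, so $G$ is locally cyclic and $G\cong C_{p^\infty}$; this also covers the abelian case. If $p=2$ and $Q_8\le G$, then $G$ is non-abelian, every finite subgroup containing $Q_8$ is generalized quaternion, and such subgroups are cofinal; working along a chain $Q_8\le Q_{16}\le\cdots$, the unique cyclic subgroup of index $2$ of $Q_{2^n}$ ($n\ge 4$) is characteristic, the union of these is a normal $C_{2^\infty}$ of index $2$ in $G$, and an element outside it inverts it with its square equal to the unique involution; so $G$ is the infinite generalized quaternion group $Q_{2^\infty}$ (the infinite analogue of the third item).

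In the case $\pi(G)=\{p,q\}$ with $p\ne q$, every finite subgroup with elements of both orders is a group $C_{r^a}\rtimes C_{s^b}$ (normal Sylow $r$-subgroup, $s^b\mid r-1$); since no distinct primes satisfy both $p\mid q-1$ and $q\mid p-1$, the ``distinguished'' prime, say $p$, is the same for all such subgroups. Then the $p$-elements of $G$ form a subgroup $P$ — the normal Sylow $p$-subgroup of each finite subgroup, glued together — and $P\trianglelefteq G$. If $p=2$, the unique involution of $P$ is central in $G$ and commutes with $q$-elements, which is impossible; so $p$ is odd and $P$ is locally cyclic, $P\cong C_{p^a}$ or $C_{p^\infty}$. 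Since $G/P$ is a $q$-group, the Schur--Zassenhaus theorem for locally finite groups ($P$ abelian normal, quotient a $q$-group) gives $G=P\rtimes Q$ with $Q$ a $q$-group having no $C_q\times C_q$. The conjugation action of $Q$ on the characteristic subgroup $\Omega\cong C_p$ of $P$ gives $\phi\colon Q\to\mathrm{Aut}(\Omega)\cong C_{p-1}$, and no element of order $q$ can lie in $\ker\phi$ (it would commute with an element of order $p$). If $Q$ were generalized quaternion (finite or infinite), then $\phi$, factoring through the cyclic group $C_{p-1}$, would have kernel of index $\le 2$ and hence containing the central involution --- a contradiction; and $Q\cong C_{q^\infty}$ is impossible, as it cannot embed in the finite group $C_{p-1}$. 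Hence $Q\cong C_{q^b}$, $\phi$ is injective, $Q$ acts faithfully, $q^b\mid p-1$, and $G\cong C_{p^a}\rtimes C_{q^b}$ or $G\cong C_{p^\infty}\rtimes C_{q^b}$ --- the last two items.

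The parts I expect to be the real work are: identifying the infinite locally finite $2$-group with a unique involution as the infinite generalized quaternion group (via the chain argument sketched above, or a citation to the classification of such groups); invoking the locally-finite version of the Schur--Zassenhaus theorem; and checking that the distinguished prime in the two-prime case is well defined, so that the $p$-elements genuinely form a subgroup.
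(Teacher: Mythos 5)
Your overall strategy coincides with the paper's: use periodicity plus solvability to get local finiteness, apply Theorem~\ref{powergraph-equals-commuting} to finitely generated (hence finite) subgroups to bound the prime spectrum by two, extract the set of $p$-elements as a locally cyclic normal subgroup, and complement it by a $q$-group. Your write-up is considerably more careful than the paper's, and this care pays off in one important place: in the single-prime case with $p=2$ you correctly find that $G$ can be the locally quaternion group $Q_{2^\infty}$, the directed union of a family $Q_8\le Q_{16}\le\cdots$. That group is metabelian, hence solvable, and its power graph does equal its commuting graph: the centraliser of any element outside the index-$2$ copy of $C_{2^\infty}$ is the $C_4$ that element generates, while any two elements of $C_{2^\infty}$ are comparable because its finite subgroups form a chain. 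Yet $Q_{2^\infty}$ appears in none of the five listed items: it is not a semidirect product of $C_{2^\infty}$ with a finite cyclic group, since every element outside $C_{2^\infty}$ has order $4$ and so the extension does not split; and item three, carried over from the finite theorem, presumably means the finite groups $Q_{2^n}$. So your proof exposes a genuine omission in the statement, and correspondingly a gap in the paper's own proof: when the paper argues that the set $N$ of $p$-elements is abelian, it asserts that the finite subgroup $S=\langle x,y\rangle$ generated by two $p$-elements is a cyclic $p$-group ``by Theorem~\ref{powergraph-equals-commuting}'', overlooking that the cited theorem also allows $S$ to be generalized quaternion; that is exactly the loophole through which $Q_{2^\infty}$ escapes. (In the two-prime case the normal prime $p$ satisfies $q^b\mid p-1$, so $p$ is odd there and the paper's argument is safe; the problem is confined to $\pi(G)=\{2\}$.)

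Two smaller points to tighten in your own argument. First, in the $\pi(G)=\{2\}$ case you should work with the directed family of all finite subgroups containing a fixed $Q_8$ rather than with a single chain: the family is directed because any two members generate a finite subgroup containing $Q_8$, hence generalized quaternion, and for $|H|\ge 16$ the index-$2$ cyclic subgroups are nested since each is generated by an element of order at least $8$ and all elements outside the cyclic part have order $4$. Second, in the two-prime case it is cleaner to prove first that $C_G(\Omega)=P$ (a nontrivial $q$-element centralising $\Omega$ would produce a $C_{pq}$), so that $G/P$ embeds in $\mathrm{Aut}(C_p)\cong C_{p-1}$ and is therefore a finite cyclic $q$-group with $q^b\mid p-1$; then the ordinary Schur--Zassenhaus theorem handles finite $P$, and for $P\cong C_{p^\infty}$ the extension splits because multiplication by $q^b$ is an automorphism of $C_{p^\infty}$, so $H^2(C_{q^b},C_{p^\infty})=0$. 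This avoids appealing to a Schur--Zassenhaus theorem for a possibly infinite quotient. With these adjustments your argument is complete, and the correct conclusion should either add $Q_{2^\infty}$ as a sixth case or weaken ``semidirect product'' in item five to ``extension''.
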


\begin{proof}
{
We use this fact that every finitely generated periodic solvable group is finite. We know that $G$ is periodic.
We  show that there are no three elements whose order are distinct primes. Suppose that $o(a)=p$, $o(b)=q$ and $o(c)=r$, where $p$, $q$ and $r$ are distinct primes. Let $H$ be the subgroup generated by $a$, $b$ and $c$. Then $H$ is finite. Clearly, the power graph and the commuting graph of $H$ are equal. This contradicts Theorem \ref{powergraph-equals-commuting}. Thus the order of every finite subgroup of $G$ is $p^\alpha q^\beta$, for some non-negative
integers $\alpha$ and $\beta$. By the second part of Theorem \ref{powergraph-equals-commuting}, we may assume that $\beta$ is bounded, because $q^\beta | p-1$. Also, by Theorem \ref{powergraph-equals-commuting}, the order of every element of $G$ is a $p$-power or a $q$-power, because every cyclic subgroup is a $p$-group or a $q$-group. 
If $\alpha, \beta >0$, then by the second part of Theorem \ref{powergraph-equals-commuting},  $\langle a, b \rangle $  is semidirect product of $\langle a \rangle$ and $\langle b \rangle$. So, $\langle a \rangle$ and
$\langle b \rangle$ are both cyclic  (even in the case $q=2$). Let $N$ be the set of all elements of $G$ whose orders are $p$-power.
We show that $N$ is an abelian normal subgroup of $G$. To see this first we show that if $x$ and $y$ are two elements of $N$, then $xy=yx$. Let $S$ be the subgroup
generated by $x$ and $y$. Then $S$ is a finite group of order $p^\alpha q^\beta$. If $\beta=0$, then by Theorem \ref{powergraph-equals-commuting}, $S$ is a cyclic
$p$-group and we are done. So assume that $\beta>0$. Let $N_1$ and $Q$ be Sylow $p$-subgroup and Sylow $q$-subgroup of $S$, respectively.
Then by Theorem \ref{powergraph-equals-commuting} both are cyclic. Now, by \cite[Theorem 6.2.11]{scott}, $Q$ has a normal complement. So, $N_1\lhd S$. This implies that $x,y\in N_1$ and
so $xy=yx$. Thus we conclude that $N$ is an abelian $p$-subgroup of $G$. Now, by the definition of $N$, $N\lhd G$.

Now, let $Q$ be a $q$-subgroup of $G$ which has maximum size. We prove that $G=NQ$. 

Let $a\in G$ be an element of $G$ whose order is
$q$-power. Let $M$ be the subgroup generated by $a$ and $Q$. Then $M=P_1Q_1$, where $P_1$ and $Q_1$ are Sylow $p$-subgroup and Sylow $q$-subgroup of $M$, respectively and $P_1\lhd M$. But $Q_1$ is a conjugate of $Q$ and so $M=P_1Q$. Since $a\in M$, we have $a=bc$, where $b\in P_1$ and $c\in Q$.
But $P_1\subseteq P$ and this implies that $a\in PQ$, as desired. So $G=NQ$.

Since $N$ is an abelian group, the commuting graph of $N$ and so the power graph of $N$ is a complete graph.
Now, Theorem \ref{alphapgroup} yields that $N$ is finite or $N=Z_{p^\infty}$. }
\end{proof}
\section{The enhanced power graph}

\subsection{Definition and properties}

In the Section \ref{sec-power-graph} we investigated some properties of power graphs of groups. In Theorem \ref{powergraph-equals-commuting}, we  characterized finite groups for which the power graph is the same as the commuting graph. Now, it is natural to ask if they are not equal, how close these graphs are. To tackle this problem we introduce an intermediate graph. This graph can be regarded as a measurement for this difference. Given a group $G$, the \emph{enhanced power graph} of $G$ denoted by $\mathcal{G}_e(G)$ is the graph with vertex set $G$, in which $x$ and $y$ are joined if and only if there exists
an element $z$ such that both $x$ and $y$ are powers of $z$. 

The power graph and commuting graph behave well on restriction to a subgroup
(that is, if $H\le G$, then the power graph of $H$ is the induced subgraph
of the power graph of $G$ on the set $H$, and similarly for the commuting
graph). Because of the existential quantifier in the definition, it is not
obvious that the same holds for the enhanced graph. That this is so is a
consequence of the fact that $x$ and $y$ are joined in the enhanced power
graph if and only if $\langle x,y\rangle$ is cyclic. Note that
\begin{itemize}
\item the power graph is a spanning subgraph of the enhanced power graph;
\item the enhanced power graph is a spanning subgraph of the commuting graph.
\end{itemize}

In the next remark, we use the concept of graph squares. For a graph $H$, the \textit{square} of $H$ is a graph with the same vertex set as $H$ in which two vertices are adjacent if their distance in $H$ is at most two.  
\begin{remark}
 If  we assume that the (undirected or directed) power graph has a loop at each vertex, then the enhanced power graph lies between the power graph and
its square. We already saw that it contains the power graph (as a spanning subgraph).
Now, let $x$ and $y$ be two vertices joined by a path $(x,z,y)$ of length
$2$ in the power graph. There are four cases in the directed power graph
$D=\vec{\mathcal{G}}(G)$:
\begin{itemize}\itemsep0pt
\item $(x,z),(z,y)\in E(D)$. Then $x$ is a power of $z$, and $z$ a power of
$y$; so $x$ is a power of $y$, and $(x,y)\in E(D)$.
\item $(z,x),(y,z)\in E(D)$. Dual to the first case.
\item $(x,z),(y,z)\in E(D)$. Then $x$ and $y$ are powers of $z$, so they
are joined in the square of the power graph.
\item $(z,x),(z,y)\in E(D)$. In this case there is nothing we can say.
\end{itemize}
\end{remark}

Also the following holds:

\begin{thm}
Let $G$ and $H$ be finite groups. If the power graphs of $G$ and $H$ are
isomorphic, then their enhanced power graphs are also isomorphic.
\end{thm}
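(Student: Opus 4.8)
The plan is to recover the enhanced power graph from the power graph by an intrinsic graph-theoretic construction, so that any isomorphism of power graphs automatically respects it. The key observation is that, in a finite group, $x$ and $y$ are joined in $\mathcal{G}_e(G)$ precisely when $\langle x,y\rangle$ is cyclic, i.e. when there is a single element $z$ of which both are powers; equivalently, $x$ and $y$ have a common neighbour $z$ in the power graph (allowing $z\in\{x,y\}$) such that $z$ is adjacent to \emph{every} element of $\langle z\rangle\supseteq\{x,y\}$ — in short, $x$ and $y$ lie together in some cyclic subgroup. So the task reduces to characterizing, purely from the power graph, which pairs lie in a common cyclic subgroup.

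First I would show that the cyclic subgroups of $G$ are recognizable from $\mathcal{G}(G)$. A cyclic group of order $n$ has power graph with a specific structure: its generators (there are $\phi(n)$ of them) are adjacent to all $n$ vertices, and more generally the closed neighbourhoods are totally ordered by inclusion, matching the divisor lattice $\mathcal{D}(n)$ as in the discussion preceding Theorem~\ref{main-theorem}. The point is that a generator $z$ of a maximal cyclic subgroup $\langle z\rangle$ is detectable as a vertex whose closed neighbourhood $N[z]$ induces exactly the power graph of a cyclic group and is maximal with this property; then $\langle z\rangle = N[z]$ as a vertex set. Thus from $\mathcal{G}(G)$ one reconstructs the family $\mathcal{M}$ of (vertex sets of) maximal cyclic subgroups, and then $x\sim y$ in $\mathcal{G}_e(G)$ iff $\{x,y\}\subseteq M$ for some $M\in\mathcal{M}$. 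Since an isomorphism $\Phi\colon\mathcal{G}(G)\to\mathcal{G}(H)$ sends closed neighbourhoods to closed neighbourhoods and preserves "being the power graph of a cyclic group," it carries $\mathcal{M}_G$ bijectively to $\mathcal{M}_H$, hence preserves the relation "lie in a common maximal cyclic subgroup," hence is an isomorphism $\mathcal{G}_e(G)\to\mathcal{G}_e(H)$.

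The step I expect to be the main obstacle is the clean characterization, from $\mathcal{G}(G)$ alone, of which induced subgraphs are power graphs of cyclic groups and which vertices generate a given maximal cyclic subgroup — in particular dealing with small orders (the identity is adjacent to everything; $C_2$, $C_3$, $C_4$, $C_p\times C_p$ inside $N[z]$ etc. can create ambiguities) and with the fact that a non-generator can still have a large, structured neighbourhood. One safeguard is to work instead with the known fact from \cite{cameronghosh} that for a finite group the undirected power graph determines the directed power graph: once the directed power graph is recovered, $\langle z\rangle$ is simply the out-neighbourhood-closure of $z$, the maximal cyclic subgroups are immediate, and the argument above goes through verbatim. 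I would state this reduction first and then only sketch the cyclic-subgroup recognition as an alternative self-contained route. Either way the proof is short: reconstruct the maximal cyclic subgroups, observe $\mathcal{G}_e(G)$ is their "union graph," and conclude that a power-graph isomorphism is an enhanced-power-graph isomorphism.
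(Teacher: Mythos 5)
Your ``safeguard'' route is exactly the paper's proof: the paper simply observes that $x$ and $y$ are adjacent in the enhanced power graph if and only if some vertex $z$ dominates both in the directed power graph, and then invokes the main theorem of \cite{cameron} that the undirected power graph of a finite group determines the directed power graph up to isomorphism. The self-contained neighbourhood-recognition argument in your first two paragraphs is not needed (and, as you yourself note, has unresolved ambiguities for small orders and non-generators, and would only be required if you wanted the stronger claim that \emph{every} power-graph isomorphism is itself an enhanced-power-graph isomorphism rather than mere existence of one), so stating the reduction alone suffices.
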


\begin{proof}
{Note that $x$ and $y$ are joined in the enhanced power graph
if and only if there is a vertex $z$ which dominates both in the directed
power graph. So the theorem follows from the main theorem of~\cite{cameron}.}
\end{proof}
\subsection{Comparing to the power graph and commuting graph}

\begin{ques}
For which (finite) groups is the power graph equal to the enhanced power graph?
\end{ques}

This question connects with another graph associated with a finite group,
the \emph{prime graph}, defined by Gruenberg and Kegel~\cite{gk}: the
vertices of the prime graph of $G$ are the prime divisors of $|G|$, and
vertices $p$ and $q$ are joined if and only $G$ contains an element of 
order $pq$. To state the next result we need a definition. The group $G$ is a
\emph{$2$-Frobenius group} if it has normal subgroups $F_1$ and $F_2$
such that $F_1<F_2$, $F_2$ is a Frobenius group with Frobenius kernel $F_1$,
and $G/F_1$ is a Frobenius group with Frobenius kernel $F_2/F_1$.

In the statement of the following theorem, $p$ and $q$ denote distinct primes.

\begin{thm}\label{power=enhance}
For a finite group $G$, the following conditions are equivalent:
\begin{itemize}
\item[(a)] the power graph of $G$ is equal to the enhanced power graph;
\item[(b)] every cyclic subgroup of $G$ has prime power order;
\item[(c)] the prime graph of $G$ is a null graph.
\end{itemize}
A group $G$ with these properties is one of the following: a $p$-group; a
Frobenius group whose kernel is a $p$-group and complement a $q$-group;
a $2$-Frobenius group where $F_1$ and $G/F_2$ are $p$-groups and
$F_2/F_1$ is a $q$-group; or $G$ has a normal $2$-subgroup with quotient
group $H$, where $S\le H\le\mathrm{Aut}(S)$ and $S\cong A_5$ or $A_6$.
\end{thm}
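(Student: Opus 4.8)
The plan is to first establish the equivalence of (a), (b), (c), which should be the easy part, and then invoke the Gruenberg--Kegel classification of finite groups with disconnected prime graph to obtain the structural description. For (a)$\Leftrightarrow$(b): by the remark that $x$ and $y$ are joined in $\mathcal{G}_e(G)$ iff $\langle x,y\rangle$ is cyclic, the power graph equals the enhanced power graph precisely when, whenever $\langle x,y\rangle$ is cyclic, one of $x,y$ is a power of the other. A cyclic group $C_m$ has the property that in its power graph every pair is joined iff $m$ is a prime power (if $m=p^aq^b\cdots$ with two distinct prime divisors, a generator of the $C_p$ part and a generator of the $C_q$ part commute, generate a cyclic group, but neither is a power of the other). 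So (a) holds iff $G$ has no cyclic subgroup whose order has two distinct prime divisors, which is exactly (b). For (b)$\Leftrightarrow$(c): an element of order $pq$ generates a cyclic subgroup of non-prime-power order, and conversely a cyclic subgroup of order divisible by distinct primes $p,q$ contains an element of order $pq$; so (b) fails iff some pair of primes is joined in the prime graph, i.e. iff (c) fails.

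For the structural statement, I would reformulate (c) as saying the prime graph $\mathrm{GK}(G)$ has \emph{no edges at all}, i.e. it is totally disconnected with one component per prime divisor of $|G|$. The Gruenberg--Kegel theorem (as refined by Williams and Kondrat'ev) classifies finite groups whose prime graph is disconnected into three types: (1) Frobenius groups, (2) $2$-Frobenius groups, and (3) groups $G$ with a normal series $1\trianglelefteq N\trianglelefteq M\trianglelefteq G$ where $N$ and $G/M$ are $\pi_1$-groups ($\pi_1$ the component of the prime $2$), $M/N$ is a non-abelian simple group, and the odd-order components of $G$ are among the components of $M/N$. The idea is to run through these three cases under the stronger hypothesis that \emph{every} prime divisor is its own component.

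In the Frobenius case, the prime graph being null forces the kernel $F_1$ and complement $F_2/F_1$ each to be of prime power order (a Frobenius kernel is nilpotent, hence a direct product of its Sylow subgroups, which must be a single prime power or else two kernel primes would be joined; similarly a Frobenius complement has all Sylow subgroups cyclic or generalized quaternion and has at most two prime divisors — here exactly one — arguing via the fact that it has a normal Sylow subgroup for the largest prime, etc.), giving the Frobenius-group conclusion with $F_1$ a $p$-group and complement a $q$-group. In the $2$-Frobenius case one unwinds the two Frobenius quotients the same way: $F_1$ and $G/F_2$ lie in the upper Frobenius structure and must be $p$-groups, while $F_2/F_1$ is the kernel of the lower Frobenius group and must be a $q$-group. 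In the simple-section case, the component $\pi_1$ of $2$ being a single prime and $N$, $G/M$ being $\pi_1$-groups forces $N$ and $G/M$ to be $2$-groups (actually $G/M$ could a priori involve the prime $2$ only), and the simple group $S=M/N$ must itself have null prime graph; checking the classification of finite simple groups with null prime graph, the only possibilities are $A_5$ and $A_6$, whose prime graphs on $\{2,3,5\}$ have no edges. One then has to verify $N$ is a $2$-group (rather than a general $\pi_1=\{2\}$-group — same thing), that $G/N$ sits between $S$ and $\mathrm{Aut}(S)$, and absorb $G/M$: since $\mathrm{Out}(A_5)$ and $\mathrm{Out}(A_6)$ are $2$-groups, $G/N$ embeds in $\mathrm{Aut}(S)$ with $S\le G/N$.

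The main obstacle is the last case: one must invoke the classification of finite simple groups (to know which simple groups have an edgeless prime graph — this is a known finite list, and intersecting with "null" leaves only $A_5$ and $A_6$), and one must carefully track that the $2$-Frobenius and Frobenius reductions, combined with the requirement that \emph{no} two primes are adjacent, genuinely collapse the number of primes involved to the stated minimum. I expect the Frobenius and $2$-Frobenius cases to be routine once the right structure theorems for Frobenius kernels (nilpotent) and complements (metacyclic with at most two prime divisors, here one) are quoted, but the simple-group case requires the explicit small list from CFSG and a short computation of the prime graphs of $A_5$ and $A_6$ to confirm they indeed have the property.
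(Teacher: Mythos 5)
Your proposal is correct and follows essentially the same route as the paper: prove (a)$\Leftrightarrow$(b)$\Leftrightarrow$(c) directly via cyclic subgroups of order $pq$, then feed the null prime graph into the Gruenberg--Kegel/Williams classification and analyse the Frobenius, $2$-Frobenius and simple-section cases. The only detail worth adding is the reason $F_1$ and $G/F_2$ are $p$-groups for the \emph{same} prime in the $2$-Frobenius case: an element of the top Frobenius complement centralises some element of $F_1$, so their primes would otherwise be joined in the prime graph.
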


All these types of group exist. Examples include $S_3$ and $A_4$ (Frobenius
groups); $S_4$ (a $2$-Frobenius group); $A_5$, $A_6$ and $2^4:A_5$.

\begin{proof}
{Let $p$ and $q$ be distinct primes. The cyclic group of order $pq$ does
not have property (a); so a group satisfying (a) must also satisfy (b).
Conversely, suppose that (b) holds. If $x$ and $y$ are adjacent in the
enhanced power graph, then $\langle x,y\rangle$ is cyclic, necessarily of
prime power order; so it must be generated by one of $x$ and $y$, and
so $x$ and $y$ are adjacent in the power graph.

Clearly,  (b) and (c) are equivalent.

Now, let $G$ be a group satisfying these conditions. Either $G$ is a $p$-group
for some prime $p$, or the prime graph of $G$ is disconnected.
Now, we use the result of Gruenberg and Kegel~\cite{gk} (stated and proved in
Williams~\cite{williams}), asserting that a finite group with disconnected
prime graph  is Frobenius or $2$-Frobenius, simple, $\pi_1$ by simple, simple
by $\pi_1$-solvable, or $\pi_1$ by simple by $\pi_1$. Here $\pi_1$ is the
set of primes in the connected component of the prime graph containing $2$,
assuming that $|G|$ is even; and a $2$-Frobenius group is a group $G$ with
normal subgroups $F_1<F_2$ such that $F_2$ is a Frobenius group with kernel
$F_1$, and $G/F_1$ is a Frobenius group with kernel $F_2/F_1$.

It follows from the work of Frobenius that a Frobenius complement either has
all Sylow subgroups cyclic (and so is metacyclic) or has $\mathrm{SL}(2,3)$ or
$\mathrm{SL}(2,5)$ as a normal subgroup. These last two cases cannot occur,
since the central involution commutes with elements of order $3$. In the 
first case, the results
of Gruenberg and Kegel (see the first corollary in Williams~\cite{williams})
show that the Frobenius complement has only one prime divisor.

In the case of a $2$-Frobenius group, an element of the Frobenius complement
in the top group centralises some element of $F_1$; so $F_1$ and $G/F_2$ must
be $p$-groups for the same prime $p$.

In the remaining cases, it can be read off from the tables in
Williams~\cite{williams} that the simple group can only be $A_5$ or $A_6$,
and the conclusions of the theorem follow since $\pi_1=\{2\}$.}
\end{proof}
\begin{ques}
For which (finite) groups is the enhanced power graph
equal to the commuting graph?\label{q:qq}
\end{ques}
Again, we have a lot of information about such a group. 
\begin{thm}
For a finite group $G$, the following conditions are equivalent:
\begin{itemize}
\item[(a)] the enhanced power graph of $G$ is equal to its commuting graph;
\item[(b)] $G$ has no subgroup $C_p\times C_p$ for $p$ prime;
\item[(c)] the Sylow subgroups of $G$ are cyclic or (for $p=2$) generalized
quaternion.
\end{itemize}
A group satisfying these conditions is either a cyclic $p$-group for some
prime $p$, or satisfies the following: if $O(G)$ denotes the largest normal
subgroup of $G$ of odd order, then $O(G)$ is metacyclic,
$H=G/O(G)$ is a group with a unique involution $z$, and $H/\langle z\rangle$
is a cyclic or dihedral $2$-group, a subgroup of
$\mathrm{P}\Gamma\mathrm{L}(2,q)$ containing $\mathrm{PSL}(2,q)$ for $q$ an
odd prime power, or $A_7$.
\end{thm}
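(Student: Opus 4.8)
The equivalence of (a), (b), (c) is the easy part and I would dispatch it first. For (a)$\Leftrightarrow$(b): if $G$ contains $C_p\times C_p$, then two generators of distinct subgroups of order $p$ commute but neither generates a cyclic group containing the other (indeed $\langle x,y\rangle\cong C_p\times C_p$ is not cyclic), so the enhanced power graph is strictly smaller than the commuting graph; conversely, if $x$ and $y$ commute then $\langle x,y\rangle$ is a finite abelian group, and a finite abelian group is cyclic iff it has no $C_p\times C_p$ subgroup, so (b) forces $\langle x,y\rangle$ cyclic and the two graphs coincide. For (b)$\Leftrightarrow$(c): a finite $p$-group has no $C_p\times C_p$ iff it is cyclic or generalized quaternion by Burnside's theorem (\cite[Theorem 12.5.2]{hall}), and $G$ has no $C_p\times C_p$ iff every Sylow subgroup has none (any $C_p\times C_p$ lies inside a Sylow $p$-subgroup, and conversely).

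\textbf{The structure theorem.} With (c) in hand, $G$ is a finite group all of whose Sylow subgroups are cyclic or generalized quaternion; I would split on whether $|G|$ is odd or even. If $|G|$ is odd, every Sylow subgroup is cyclic, so $G$ is a $Z$-group (metacyclic), in particular metacyclic, and $G=O(G)$; this is the degenerate branch of the metacyclic conclusion (and if moreover $G$ is a $p$-group it is cyclic, giving the first alternative). If $|G|$ is even, pass to $H=G/O(G)$. The Sylow $2$-subgroup $P$ of $H$ is cyclic or generalized quaternion, hence has a unique involution $z$; I would show $z$ is central in $H$ using Glauberman's $Z^*$-theorem exactly as in the proof of Theorem~\ref{powergraph-equals-commuting} — since $O(H)=1$, the $Z^*$-theorem gives a central involution, which must be $z$. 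Then consider $\bar H=H/\langle z\rangle$. Its Sylow $2$-subgroup is $P/\langle z\rangle$, which is cyclic (if $P$ was cyclic or quaternion) or dihedral (if $P$ was generalized quaternion of order $\ge 16$) — again no $C_2\times C_2$ issues beyond what dihedral $2$-groups already have. Now one invokes the classification of finite groups with cyclic or dihedral Sylow $2$-subgroups: by the Gorenstein–Walter theorem a finite group with dihedral Sylow $2$-subgroup is (modulo its largest odd normal subgroup) a $2$-group, $A_7$, or a subgroup of $\mathrm{P}\Gamma\mathrm{L}(2,q)$ containing $\mathrm{PSL}(2,q)$ for $q$ an odd prime power; groups with cyclic Sylow $2$-subgroup are $2$-nilpotent (Burnside), so $\bar H$ is cyclic-by-(odd), which since $O(\bar H)$ must again be handled gives the cyclic-$2$-group case. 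The metacyclicity of $O(G)$ is immediate since it has odd order and all Sylow subgroups cyclic.

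\textbf{The main obstacle.} The genuinely hard step is the last one: reducing to and correctly applying the Gorenstein–Walter classification of simple (and almost simple) groups with dihedral Sylow $2$-subgroups, together with Burnside's normal $p$-complement theorem for the cyclic case, and then bookkeeping how $O(G)$, $\langle z\rangle$, and the simple quotient fit together. One has to be careful that $O(\bar H)$ need not be trivial and that the ``$A_7$ or subgroup of $\mathrm{P}\Gamma\mathrm{L}(2,q)$'' description survives the quotient by the odd part correctly; this requires checking that these simple groups and their automorphic extensions themselves satisfy (c), i.e.\ have cyclic or quaternion Sylow subgroups (for $\mathrm{PSL}(2,q)$ the Sylow $2$-subgroup is dihedral only when we have already quotiented by $z$, so at the level of $H$ one needs the Sylow $2$-subgroup of the relevant extension of $\mathrm{PSL}(2,q)$ to be generalized quaternion or the group to be handled via the $2$-Frobenius/Frobenius branches — a case analysis parallel to that in Theorem~\ref{power=enhance}). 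I would organize this final portion as a lemma quoting Gorenstein–Walter verbatim and then a short verification table, rather than reproving anything.
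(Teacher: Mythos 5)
Your proposal is correct and follows essentially the same route as the paper: the $C_p\times C_p$ argument for (a)$\Leftrightarrow$(b), Burnside's theorem for (b)$\Leftrightarrow$(c), then Burnside's transfer theorem in the cyclic Sylow $2$-subgroup case and Glauberman's $Z^*$-theorem plus Gorenstein--Walter applied to $H/\langle z\rangle$ in the generalized quaternion case. (Your worry about $O(H/\langle z\rangle)$ being nontrivial evaporates since $O(G/O(G))=1$ forces it to be trivial, and the paper closes the loop from $H/\langle z\rangle$ back to $H$ via Glauberman's uniqueness of the double cover with a single involution.)
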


An example of a group for the second case is the direct product of
the Frobenius group of order $253$ by $\mathrm{SL}(2,5)$.

\begin{proof}
{The group $C_p\times C_p$ has commuting graph not equal to its enhanced power
graph, so cannot be a subgroup of a group satisfying (a); thus (a) implies
(b). Conversely, suppose that (b) holds. Let $x$ and $y$ be elements of $G$
which are adjacent in the commuting graph. Then $\langle x,y\rangle$ is
abelian, and hence is the direct product of two cyclic groups, say
$C_r\times C_s$. Under hypothesis (b), we must have $\gcd(r,s)=1$, and
so $\langle x,y\rangle\cong C_{rs}$; thus $x$ and $y$ are joined in the
enhanced power graph.

Conditions (b) and (c) are equivalent by a theorem of Burnside
\cite[Theorem 12.5.2]{hall}.

Suppose that a Sylow $2$-subgroup $P$ of $G$ is cyclic or generalized
quaternion. If $P$
is cyclic, then by Burnside's transfer theorem \cite[Section 14.3]{hall},
$G$ has a normal $2$-complement: that is, if $O(G)$ is the largest normal
subgroup of $G$ of odd order, then $G/O(G)\cong P$. If $P$ is generalized
quaternion, then by Glauberman's $Z^*$-Theorem, $H=G/O(G)$ has a unique
central involution $z$. Put $Z=\langle z\rangle$. Then the Sylow
$2$-subgroups $Q$ of $H/Z$ are dihedral; the Gorenstein--Walter theorem
\cite{gw} shows that $H/Z$ is isomorphic to a subgroup of
$\mathrm{P}\Gamma\mathrm{L}(2,q)$ containing $\mathrm{PSL}(2,q)$ (for
odd $q$), or to the alternating group $A_7$, or to $Q$. For any such 
group $H^*=H/Z$, an argument of Glauberman (which can be found in \cite{bc})
shows that there is a unique double cover $H$ with a single involution.}
\end{proof}

\begin{ques}
What can be said about the difference of the enhanced power graph and the
power graph, or the difference of the commuting graph and the enhanced power
graph? In particular, for which groups is either of these graphs connected?
\end{ques}


\subsection{Maximal cliques in the enhanced power graph}
We will now look at maximal cliques in the enhanced power graph. This requires
a lemma which looks trivial, but we couldn't find an easier proof of it than the one below.

\begin{lem}\label{2-cyclic-3-cyclic}
Let $x,y,z$ be elements of a group $G$, and suppose that $\langle x,y\rangle$,
$\langle x,z\rangle$ and $\langle y,z\rangle$ are cyclic. Then
$\langle x,y,z\rangle$ is cyclic.
\end{lem}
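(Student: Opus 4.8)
The plan is to reduce to the finitely generated (hence, by earlier considerations, periodic-plus-free) case and then, since any cyclic group is abelian, work inside an abelian group. First I would observe that it suffices to prove the statement when $G=\langle x,y,z\rangle$, and that under the hypotheses this group is abelian: indeed $x$ commutes with $y$ and with $z$ (each pair generates a cyclic, hence abelian, group), and $y$ commutes with $z$, so all three generators pairwise commute and $G$ is abelian. Thus we may assume $G$ is a finitely generated abelian group, so by the structure theorem $G\cong \mathbb{Z}^r\times T$ with $T$ finite.

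Next I would split into the torsion and free parts. If any of $x,y,z$ has infinite order, a little care is needed, but the cleanest route is to treat the torsion-free rank first: since $\langle x,y\rangle$ is cyclic it has rank $\le 1$, and similarly for the other pairs, from which one deduces $G=\langle x,y,z\rangle$ has rank $\le 1$; if the rank is $1$ then at most one of the three elements can have infinite order up to the torsion subgroup, and one argues directly that the torsion part must be cyclic and "compatible" with the infinite cyclic direction. The heart of the matter, though, is the case where $G$ is a finite abelian group, where it reduces to the following: a finite abelian group all of whose $2$-generated subgroups are cyclic is itself cyclic. For this I would use the primary decomposition $G=\prod_p G_p$; a group is cyclic iff each $G_p$ is, and $\langle x,y\rangle$ being cyclic for all $x,y$ passes to each $G_p$, so we are reduced to a finite abelian $p$-group $A$ with every $2$-generated subgroup cyclic. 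If $A$ were non-cyclic it would contain $C_p\times C_p$ as a subgroup (take two independent elements of order $p$ from a minimal generating set), and that subgroup is $2$-generated but not cyclic --- contradiction. Hence each $G_p$ is cyclic and so is $G$.

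The main obstacle I expect is the bookkeeping in the mixed (infinite-order) case: one has to rule out, for instance, configurations like $x$ of infinite order and $y,z$ torsion elements whose pairwise combinations with $x$ stay cyclic but which together with $x$ do not. The key point to exploit there is that in $\mathbb{Z}^r\times T$, an element $(v,t)$ generates a cyclic subgroup, so the cyclic subgroup $\langle x,y\rangle$ has a single generator $(w,s)$, and $x,y$ being powers of $(w,s)$ pins down the torsion components of $x,y$ as powers of $s$; chasing these constraints for all three pairs forces $\langle x,y,z\rangle$ to be generated by a single element. An alternative, and probably the slickest write-up, is simply to quote that in a finitely generated abelian group $G$, cyclicity is equivalent to every Sylow subgroup of the torsion part being cyclic together with the rank being $\le 1$, and to verify each of these conditions is inherited from the three $2$-generated subgroups --- this sidesteps most of the case analysis. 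Either way, the finite $p$-group step via the $C_p\times C_p$ obstruction is the conceptual core, and everything else is reduction and routine structure theory.
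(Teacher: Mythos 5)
There is a genuine gap at what you yourself call the conceptual core. The hypothesis gives you cyclicity of exactly three subgroups, $\langle x,y\rangle$, $\langle x,z\rangle$ and $\langle y,z\rangle$; it does \emph{not} give you that every $2$-generated subgroup of $\langle x,y,z\rangle$ is cyclic, yet that is the statement you reduce to and then refute via the $C_p\times C_p$ obstruction (``take two independent elements of order $p$ from a minimal generating set'' --- those two elements need not be among $x,y,z$ or powers of the pairs, so nothing in the hypothesis forces the subgroup they generate to be cyclic). Worse, if you \emph{did} know that all $2$-generated subgroups were cyclic, the lemma would be immediate without any $p$-group analysis: writing $\langle x,y\rangle=\langle w\rangle$, the group $\langle x,y,z\rangle=\langle w,z\rangle$ is itself $2$-generated. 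So the reduction as stated begs the question. The step can be repaired: in a cyclic $p$-group the subgroups are totally ordered, so $\langle x,y\rangle$ cyclic forces $\langle x\rangle$ and $\langle y\rangle$ to be comparable; pairwise comparability of $\langle x\rangle,\langle y\rangle,\langle z\rangle$ then puts all three inside the largest one. (The paper instead shows that each of $\langle x\rangle,\langle y\rangle,\langle z\rangle$ meets a putative $C_p\times C_p$ in the \emph{same} subgroup of order $p$, using cyclicity of the three pairwise subgroups --- that is the extra argument your sketch is missing.)

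A secondary point: the ``mixed'' case you flag as the main obstacle is actually vacuous, and you never resolve it. A nontrivial cyclic group is either finite or isomorphic to $\mathbb{Z}$, so it cannot contain both a nonidentity element of finite order and an element of infinite order; hence if, say, $x$ has infinite order and $y\neq e$ has finite order, $\langle x,y\rangle$ could not be cyclic. After discarding identity elements, either all of $x,y,z$ have finite order or all have infinite order, which is exactly how the paper dispenses with the bookkeeping you were worried about. Your treatment of the all-infinite-order case via rank is in the right spirit (the paper does it by tensoring with $\mathbb{Q}$ and noting the three lines coincide), but it too is only a sketch.
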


\begin{proof}
{
The result clearly holds if one of $x,y,z$ is the
identity; so suppose not. Now, a cyclic group cannot contain elements of
both finite and infinite order, so either all three elements have finite
order, or all three have infinite order.
\subparagraph{Case 1:} $x,y,z$ have finite order. Then they generate a finite
abelian group $A$.

We first note that it suffices to do the case where the orders of $x,y,z$ are
powers of a prime $p$. For $A$ is the direct sum of $p$-groups for various
primes $p$; each $p$-group is generated by certain powers of $x,y,z$; and if
each $p$-group is cyclic, then so is $A$. With this assumption, suppose that $A$ is not cyclic. Since $\langle x,y\rangle$
is cyclic, $A$ is the sum of two cyclic groups, and so contains a subgroup
$Q\cong C_p\times C_p$. Each of $\langle x\rangle$, $\langle y\rangle$ and
$\langle z\rangle$ intersects $Q$ in a subgroup of order $p$; let these
subgroups be $X,Y,Z$. Since $\langle x,y\rangle$ is cyclic, it meets $Q$ in
a subgroup of order $p$; so $X=Y$. Similarly $X=Z$. So $\langle x,y,z\rangle$ meets
$Q$ in a subgroup of order $p$, contradicting the assumption that
$Q\le\langle x,y,z\rangle$.

\subparagraph{Case 2:} $x,y,z$ have infinite order. 

Then they generate a free
abelian group; since $\langle x,y\rangle$ is cyclic, we see that
$A=\langle x,y,z\rangle$ has rank at most $2$. Consider the $\mathbb{Q}$-vector space $A\otimes_\mathbb{Z}\mathbb{Q}$, which
has dimension at most $2$. Since $\langle x,y\rangle$ is cyclic, the
$1$-dimensional subspaces $\langle x\rangle\otimes_\mathbb{Z}\mathbb{Q}$ and
$\langle y\rangle\otimes_\mathbb{Z}\mathbb{Q}$ have non-empty intersection, and so are
equal. Similarly for $\langle z\rangle\otimes_\mathbb{Z}\mathbb{Q}$. Thus
$A\otimes_\mathbb{Z}\mathbb{Q}$ is $1$-dimensional, so $A$ is cyclic.
}
\end{proof}

Now, we have the following characterization of the maximal cliques in the enhanced power graph. 

\begin{lem} A maximal clique in the enhanced power graph is either
a cyclic subgroup or a locally cyclic subgroup.
\end{lem}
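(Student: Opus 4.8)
The plan is to use the previous lemma (Lemma \ref{2-cyclic-3-cyclic}) as the engine: it upgrades pairwise compatibility to joint compatibility for three elements, and by an easy induction, for any finite set of elements. First I would let $C$ be a maximal clique in $\mathcal{G}_e(G)$. Any two elements $x,y\in C$ are joined, so $\langle x,y\rangle$ is cyclic; I want to show $C$ is a subgroup. The first step is to show that any finite subset $\{x_1,\dots,x_k\}\subseteq C$ generates a cyclic group. This follows by induction on $k$: the base case $k\le 2$ is the definition of adjacency in the enhanced power graph, and the inductive step uses Lemma \ref{2-cyclic-3-cyclic} — given that $H=\langle x_1,\dots,x_{k-1}\rangle$ is cyclic with generator $g$, and $\langle g,x_k\rangle$ is cyclic (because $g$ is a power of each $x_i$, hence joined to... no — more carefully, one applies the three-element lemma repeatedly, or notes that $g\in C$ is not automatic). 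So I would instead phrase the induction as: adjoin $x_k$ to the cyclic group $\langle x_1,\dots,x_{k-1}\rangle = \langle g\rangle$; since each $x_i$ and $x_k$ are joined, $\langle x_i,x_k\rangle$ is cyclic; applying Lemma \ref{2-cyclic-3-cyclic} with the triple $x_{k-1}, g, x_k$ — wait, $g$ need not be in $C$. The cleanest route: prove by induction that $\langle x_1,\dots,x_k\rangle$ is cyclic using that all $\binom{k}{2}$ pairs are cyclic, reducing via the lemma. Actually the lemma already handles $k=3$; for general $k$ I would argue that if $\langle x_1,\dots,x_{k-1}\rangle$ is cyclic, say generated by $h$ (a word in the $x_i$), then $\langle h, x_k\rangle$ — here I should show $h$ and $x_k$ generate a cyclic group directly. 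In the torsion case, localize at each prime $p$ and use that the $p$-parts live in a group of rank $\le 1$; in the torsion-free case, everything sits in a rank-$1$ subspace of a $\mathbb{Q}$-vector space. This is exactly the argument of Lemma \ref{2-cyclic-3-cyclic} verbatim, so I would simply say the same proof extends by induction to finitely many elements.

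Having established that every finitely generated subgroup of $\langle C\rangle$ generated by elements of $C$ is cyclic, the second step is to show $\langle C\rangle = C$, so that $C$ is a subgroup. Take any $x,y\in C$ and any element $z\in\langle x,y\rangle$: then $z$ is a power of a common generator of the cyclic group $\langle x,y\rangle$, so $z$ is joined to every $w\in C$ (since $\langle w, z\rangle \subseteq \langle w, x, y\rangle$, which is cyclic by the three-element extension just proved, hence $\langle w,z\rangle$ is cyclic). By maximality of $C$, we get $z\in C$. More generally, for any finite subset $S\subseteq C$ and any $z\in\langle S\rangle$, the group $\langle S\cup\{z\}\rangle = \langle S\rangle$ is cyclic, so $z$ is joined to all of $C$ (using the finite extension lemma on $S\cup\{w,z\}$), hence $z\in C$. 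Therefore $C$ is closed under the group operation and inverses, i.e., $C$ is a subgroup of $G$.

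The third step is to identify which kind of subgroup: $C$ is a subgroup in which every finitely generated subgroup is cyclic — that is, $C$ is locally cyclic by definition. A locally cyclic group is either cyclic itself or not finitely generated; so $C$ is a cyclic subgroup or a (non-cyclic) locally cyclic subgroup, which is exactly the statement. I would also remark that conversely any cyclic or locally cyclic subgroup is a clique in $\mathcal{G}_e(G)$, though maximality is the content here.

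The main obstacle is the finite-to-infinite bookkeeping in steps one and two: Lemma \ref{2-cyclic-3-cyclic} is stated only for three elements, so I need to be careful that the induction to $k$ elements genuinely works — the subtlety is that the generator of $\langle x_1,\dots,x_{k-1}\rangle$ produced by the inductive hypothesis is not itself a vertex of $C$, so one cannot directly invoke "adjacency" for it. The fix is to re-run the actual argument of Lemma \ref{2-cyclic-3-cyclic} (prime-by-prime localization in the torsion case, the rank argument over $\mathbb{Q}$ in the torsion-free case) with $k$ elements rather than $3$, since those arguments never used that there were only three elements — they only used that all pairwise-generated subgroups are cyclic. Once that finite case is in hand, closure under the group operation (step two) is immediate and everything else is formal.
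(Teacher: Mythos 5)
Your argument is correct and rests on the same key ingredient as the paper's proof, namely Lemma~\ref{2-cyclic-3-cyclic} together with maximality, but you order the two main steps the other way round, and this is what creates the difficulty you wrestle with. The paper first proves that $C$ is a subgroup: for $x,y\in C$ and any $z\in C$, the three-element lemma gives that $\langle x,y,z\rangle$ is cyclic, so every element of $\langle x,y\rangle$ is joined to $z$; hence $C\cup\langle x,y\rangle$ is a clique and maximality forces $\langle x,y\rangle\subseteq C$. Only \emph{then} does it run the induction on finite subsets, and at that point the induction is genuinely simple: the generator $h$ of the cyclic group $\langle x_1,\dots,x_{k-1}\rangle$ is a word in elements of the subgroup $C$, hence lies in $C$, hence is a vertex of the clique adjacent to $x_k$, so $\langle x_1,\dots,x_k\rangle=\langle h,x_k\rangle$ is cyclic directly from the definition of adjacency --- no re-examination of the internal argument of Lemma~\ref{2-cyclic-3-cyclic} is needed. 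You correctly identified the obstruction to doing the induction first (the generator $h$ need not be a vertex of $C$), but your fix --- reproving the lemma for $k$ elements by redoing the localization and rank arguments --- is heavier than necessary and not quite ``verbatim'' as claimed: in the torsion case the paper's argument uses that $A$ is $2$-generated (hence of rank at most $2$, with a unique subgroup $C_p\times C_p$), and for $k$ elements this itself already requires the inductive hypothesis. Your route can be completed (for instance, for $p$-elements one may take the generator of $\langle x_1,\dots,x_{k-1}\rangle$ to be the $x_i$ of largest order, which \emph{is} a vertex of $C$), so there is no genuine gap, but the paper's ordering of the steps dissolves the difficulty entirely; your remaining steps (maximality forcing closure, and ``every finitely generated subgroup cyclic'' being the definition of locally cyclic) coincide with the paper's.
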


\begin{proof}
{
Clearly,  a cyclic or locally cyclic subgroup is a clique. Suppose that $C$ is a maximal clique. If $x,y\in C$, then by 
 Lemma \ref{2-cyclic-3-cyclic}, every element of $\langle x,y\rangle$ is joined to every element $z\in C$;
so $C\cup\langle x,y\rangle$ is a clique. By maximality of $C$, we have
$\langle x,y\rangle\subseteq C$; so $C$ is a subgroup. Now, a simple induction
shows that any finite subset of $C$ generates a cyclic group, so that
$C$ is locally cyclic.
}
\end{proof} 

\begin{remark}
Locally cyclic groups include the additive group of $\mathbb{Q}$ (or the
subgroup consisting of rationals whose denominators only involve primes from
a prescribed set), and direct sums of copies of the $p$-quasicyclic groups (the Pr\"ufer groups) 
$C_{p^\infty}$ for distinct primes $p$.
\end{remark}
Now, we have two immediate corollaries:
\begin{cor} Let $G$ be a group. Then $\omega(\mathcal{G}_e(G))<\infty$ if and only if $G$ is a group of finite exponent. 
If these conditions hold, then 
\[\omega(\mathcal{G}_e(G))=\max\{o(g)\,:\, g\in G\}.\]
\end{cor}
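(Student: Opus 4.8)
The plan is to deduce everything from the structural description of maximal cliques in $\mathcal{G}_e(G)$ just proved, namely that every maximal clique is a cyclic or locally cyclic subgroup. First I would observe that $\omega(\mathcal{G}_e(G))$ is the supremum of the orders of the maximal cliques, hence the supremum of the orders of the cyclic and locally cyclic subgroups of $G$. For the forward direction, suppose $\omega(\mathcal{G}_e(G))<\infty$. Then every locally cyclic subgroup $C$ of $G$ is finite; in particular $C$ is cyclic, and its order is at most $\omega(\mathcal{G}_e(G))$. Since each single element $g$ of $G$ lies in the cyclic subgroup $\langle g\rangle$, which is itself a clique, we get $o(g)\le\omega(\mathcal{G}_e(G))$ for every $g\in G$; thus the set $\{o(g):g\in G\}$ is bounded and $G$ has finite exponent $n$, with $n$ equal to the least common multiple of these orders.

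For the converse, suppose $G$ has finite exponent $n$. Then every cyclic subgroup has order dividing $n$, so is finite of bounded size, and in fact a locally cyclic subgroup $C$ of $G$ is a union of its finite cyclic subgroups, all of order at most $n$; since an increasing chain of such subgroups must stabilize, $C$ itself is finite cyclic of order at most $n$. Hence every maximal clique has size at most $n$, so $\omega(\mathcal{G}_e(G))\le n<\infty$.

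It remains to identify $\omega(\mathcal{G}_e(G))$ with $\max\{o(g):g\in G\}$ when these conditions hold. One inequality is immediate: for any $g$, the set $\langle g\rangle$ is a clique of size $o(g)$, so $\omega(\mathcal{G}_e(G))\ge\max\{o(g):g\in G\}$ (the maximum exists since orders are bounded by $n$). For the reverse inequality, take any maximal clique $C$; by the lemma it is a cyclic or locally cyclic subgroup, and by the argument above it is in fact finite cyclic, say $C=\langle g\rangle$, whence $|C|=o(g)\le\max\{o(h):h\in G\}$. Taking the supremum over all maximal cliques gives $\omega(\mathcal{G}_e(G))\le\max\{o(g):g\in G\}$, and the two inequalities yield equality.

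I do not anticipate a serious obstacle here: the only mildly delicate point is the passage from "locally cyclic with all finitely generated subgroups of bounded finite order" to "finite cyclic," which needs the observation that an ascending union of cyclic $p'$-groups... more simply, that a locally cyclic group all of whose finite subgroups have order $\le n$ cannot be infinite, since an infinite locally cyclic group contains cyclic subgroups of arbitrarily large order (or a quasicyclic section). This is routine, so the whole proof is short once the maximal-clique lemma is in hand.
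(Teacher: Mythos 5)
Your proof is correct, and it takes a somewhat different route from the paper's. The paper proves the forward direction by noting that the power graph is a spanning subgraph of $\mathcal{G}_e(G)$ and invoking Lemma \ref{chrombond} (unbounded exponent forces arbitrarily large cliques already in the power graph), and handles the converse and the formula by partitioning $G$ into the sets $G_n$ of elements of order $n$ and observing that the power graph and enhanced power graph coincide on each $G_n$. You instead derive everything from the maximal-clique lemma together with the elementary observation that $\langle g\rangle$ is itself a clique in $\mathcal{G}_e(G)$: this gives $o(g)\le\omega(\mathcal{G}_e(G))$ immediately for the forward direction, and for the converse you reduce to showing that a locally cyclic group of finite exponent is finite cyclic, which then yields the exact value $\omega(\mathcal{G}_e(G))=\max\{o(g):g\in G\}$ cleanly (every maximal clique is $\langle g\rangle$ for some $g$). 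Your approach buys a more self-contained and arguably more complete derivation of the displayed formula; the paper's avoids the structural fact about bounded-exponent locally cyclic groups by leaning on the earlier power-graph machinery. The one point you flag as delicate is indeed the only one needing care, and your ``chain stabilizes'' phrasing is a little loose; the cleanest justification is to pick $g\in C$ of maximal order and note that for any $h\in C$ the cyclic group $\langle g,h\rangle$ has a generator of order at least $o(g)$, hence equals $\langle g\rangle$ by maximality, so $C=\langle g\rangle$. (Implicitly you also use that every clique extends to a maximal one, which is Zorn's lemma; this is harmless here.)
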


\begin{remark} Note that this may be smaller than the exponent
of $G$.
\end{remark}
\begin{proof}
{
Clearly,  if $G$ is not a bounded exponent group, then $\mathcal{G}(G)$  as a subgraph of $\mathcal{G}_e(G)$ has infinite clique number 
by Lemma \ref{chrombond}. Now, let $G$ be a periodic group. Then the subsets $G_n=\{g\in G: o(g)=n\}$, for $n\in\mathbb{N}$, partition $G$ into at most countably many subsets. On each of these subsets the power graph and the enhanced power graph coincide. 
In particular, if $G$ is bounded
exponent, then there are only finitely many classes. It is clear
that, if $x$ and $y$ have the same order and generate a cyclic group, then
each is a power of the other. 
}
\end{proof}

\begin{cor} A clique in the enhanced power graph of a group is at most
countable.
\end{cor}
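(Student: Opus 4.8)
The plan is to deduce this corollary from the structural description of maximal cliques just established, namely that every maximal clique in $\mathcal{G}_e(G)$ is a cyclic or locally cyclic subgroup, together with Theorem~\ref{conuntclique}, which says that the clique number of the \emph{power} graph of any group is at most countable.

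First I would observe that any clique $C$ in $\mathcal{G}_e(G)$ is contained in a maximal clique (if $G$ is infinite this needs Zorn's Lemma, but the usual chain argument applies since the union of a chain of cliques is a clique), so it suffices to show that a maximal clique is at most countable. By the previous lemma, a maximal clique $C$ is a locally cyclic subgroup of $G$; thus the ambient group can be taken to be $C$ itself, i.e.\ we may assume $G$ is locally cyclic and must show $|G|$ is at most countable. The key point is that in a locally cyclic group, any two elements generate a cyclic subgroup, hence are joined in the power graph of $G$ — indeed more is true: $\mathcal{G}(G)=\mathcal{G}_e(G)$ is a complete graph on $G$ when $G$ is locally cyclic, so $G$ itself is a clique in its own power graph. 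Then Theorem~\ref{conuntclique} applies directly: $|G|=\omega(\mathcal{G}(G))\le\aleph_0$.

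Let me double-check the claim that $\mathcal{G}(G)$ is complete when $G$ is locally cyclic. Given $x,y\in G$, the subgroup $\langle x,y\rangle$ is cyclic, say generated by $g$; then both $x$ and $y$ are powers of $g$. If $g$ has finite order, then $x$ and $y$ lie in a finite cyclic group and one is a power of the other iff their cyclic subgroups are nested, which need not hold — so $\mathcal{G}(G)$ is \emph{not} necessarily complete, only $\mathcal{G}_e(G)$ is. Hence I should argue directly with $\mathcal{G}_e(G)$ rather than reduce to the power graph. The cleaner route is: a locally cyclic group is either torsion-free (hence a subgroup of $\mathbb{Q}$, manifestly countable) or has a torsion element; in general every locally cyclic group embeds in $\mathbb{Q}/\mathbb{Z}\times\mathbb{Q}$ (the injective hull argument, cf.\ the structure of divisible abelian groups used already in the proof that abelian groups are pcc-groups), and both factors are countable, so $G$ is countable.

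The main obstacle I anticipate is nailing the countability of locally cyclic groups cleanly. The slickest formulation: every locally cyclic group is isomorphic to a subgroup of $\mathbb{Q}/\mathbb{Z}\times\mathbb{Q}$. To see this, recall a locally cyclic group $A$ is abelian, and its torsion subgroup $T$ is a subgroup of $\mathbb{Q}/\mathbb{Z}$ (a torsion locally cyclic group has at most one subgroup of each order $n$, so it embeds in $\mathbb{Q}/\mathbb{Z}$), while $A/T$ is torsion-free locally cyclic, hence embeds in $\mathbb{Q}$; one then checks the extension splits after passing to the divisible hull, giving the embedding. Since $\mathbb{Q}/\mathbb{Z}$ and $\mathbb{Q}$ are countable, so is $A$, and therefore so is any clique $C\subseteq A$. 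Alternatively, and perhaps most economically for the write-up, I would avoid the full structure theorem and argue: a locally cyclic group is the directed union of its finitely generated (hence cyclic) subgroups; in such a group the lattice of cyclic subgroups is a chain under inclusion when restricted to any finitely generated piece... — but this still requires care, so I would most likely just cite the embedding into $\mathbb{Q}/\mathbb{Z}\times\mathbb{Q}$ and conclude.
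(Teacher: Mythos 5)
Your final argument is correct and essentially the paper's: every clique lies in a maximal clique, which by the preceding lemma is a cyclic or locally cyclic subgroup, and locally cyclic groups are countable. The paper simply cites the sharper standard fact that a locally cyclic group is isomorphic to a subgroup of $\mathbb{Q}$ or of $\mathbb{Q}/\mathbb{Z}$ (such a group cannot be mixed: a cyclic group containing an element of infinite order is torsion-free), which makes your detour through the torsion subgroup, the quotient, and the embedding into $\mathbb{Q}/\mathbb{Z}\times\mathbb{Q}$ unnecessary --- and you were right to abandon the first reduction to Theorem~\ref{conuntclique}, since the power graph of a locally cyclic group need not be complete.
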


\begin{proof}
{
For a  locally cyclic group is isomorphic to a subgroup of $\mathbb{Q}$ or $\mathbb{Q}/\mathbb{Z}$ and hence countable, see \cite[Exercise 5, p.105]{robinson}. 
}
\end{proof}


\section*{Open problems}
This paper concerns  several graph theoretical parameters of the power graph of a group. In Section \ref{independence} we studied groups whose power graph has a finite independence number. In Theorem \ref{nilpotentalpha}, we proved that 
if $G$ is a nilpotent group and $\alpha(\mathcal{G})<\infty$, then either $G$ is a finite group or 
$G\cong   C_{p^\infty}\times H$, for some prime number $p$, where $H$ is a finite group and $p\nmid |H|$. This result motivates us to pose the following question.

\begin{ques}
Let $G$ be an infinite group. Is it true that $\alpha(\mathcal{G}(G))<\infty$ if and only if $G\cong   C_{p^\infty}\times H$, where $H$ is a finite group and $p\nmid |H|$.
\end{ques}

In Section \ref{colouring}, we  showed that the chromatic number of the power graph of $G$ is finite if and only if the clique number of the power graph of $G$ is finite and this statement is also equivalent to the finiteness of the exponent of $G$. We proved that the clique number of the power graph of $G$ is at most countable. We also introduced the concept of pcc groups and we posed the following question.  

\begin{ques} Is it true that the chromatic number of the power graph
of any group is at most countably infinite?
\end{ques}

It might be interesting to ask how much of Lemma \ref{chrombond} can be proved without the Axiom of Choice. Is there any way of showing that the
chromatic number of a group of finite exponent is finite? A good test case for this question would be an abelian group of exponent~3. Colouring the non-identity elements with two colours requires choosing one of each pair
$\{x,x^{-1}\}$, which requires AC (as Bertrand Russell famously pointed out).

In the study of the commuting graph,
it is normal to delete vertices which lie in the centre of the group, since
they would be joined to all other vertices. Similarly, in the study of the generating graph of a $2$-generator
group, the identity is an isolated vertex and is usually excluded.  This convention is not used for
the power graph. So any problem we raise will have two different forms, 
depending on which convention we use. For the power graph, the question of whether to include or exclude the
identity is more interesting. Some of the results will be completely different in the two cases especially those dealing with
connectedness. For example, if the identity is excluded, Theorem \ref{periodicdiameter} fails,
since indeed the power graph of the infinite cyclic group is connected when
the identity is discarded. The next question seems interesting. 

\begin{ques}
Which groups do have the property that the power graph is connected when the identity is removed? 
\end{ques}
Or more generally:
\begin{ques}
Which groups do have the property that the power graph is connected when the set
of vertices which dominate the graph is removed? 
\end{ques}
\noindent 
The following question is the second version for Question \ref{power=commuting}. 
\begin{ques}
For which groups, are the induced subgraphs of the power graph and the commuting graph on
$G\setminus\{e\}$ are equal. Note
that free groups have this property.
\end{ques}

\begin{ques}
Consider the difference of the power graph and commuting graph, the graph in
which $x$ and $y$ are joined if they commute but neither is a power of the
other. What can be said about this difference graph? In particular, for which
groups is it connected? Again this question can be asked with or without
the identity. Note that in a periodic group the identity is isolated in the
difference graph, but this is not true for arbitrary infinite groups.
\end{ques}

{}

\end{document}